\newcommand\Defn[1]{\textbf{\color{black}#1}}
\renewcommand\emptyset{\varnothing}
\newcommand\Z{\mathbb{Z}}               
\newcommand\Q{\mathbb{Q}}
\newcommand\R{\mathbb{R}}               
\newcommand\x{\mathbf{x}}
\newcommand\y{\mathbf{y}}
\newcommand\0{\mathbf{0}}
\newcommand\SymGrp{\mathfrak{S}}
\newcommand\strNum{\delta_G}
\newcommand\SecStrNum{\sigma_G}
\newcommand\Ospace{\mathcal{S}}
\newcommand\parNum{\tilde{\delta}_G}
\newcommand\SecParNum{\tilde{\sigma}_G}
\DeclareMathOperator{\rk}{rank}
\newtheorem{thm}{Theorem}%
\newtheorem{cor}[thm]{Corollary}
\newtheorem{lem}[thm]{Lemma}
\newtheorem{prop}[thm]{Proposition}
\newtheorem{conj}{Conjecture}
\theoremstyle{definition}
\newtheorem{definition}[thm]{Definition}
\newtheorem{example}{Example}
\newtheorem{rem}{Remark}
\newcommand\p{\mathbf{p}}    
\newcommand\q{\mathbf{q}}    
\renewcommand\r{\mathbf{r}}    
\newcommand\VarR{\mathrm{V}_{\R}}
\renewcommand\H{\mathcal{H}}
\newcommand\SemiAlg{\mathrm{S}}
\title{Reflection groups, reflection arrangements, and invariant real varieties}
\author{Tobias Friedl}
\address{Fachbereich Mathematik und Informatik, %
Freie Universit\"at Berlin, %
Germany}
\email{tfriedl@zedat.fu-berlin.de}
\author{Cordian Riener}
\address{Aalto Science Institute,
PO Box  11000,
FI-00076 Aalto}
\email{cordian.riener@aalto.fi}
\author{Raman Sanyal}
\address{Institut f\"ur Mathematik, Goethe-Universit\"at Frankfurt, Germany}
\email{sanyal@math.uni-frankfurt.de}
\keywords{reflection groups, reflection arrangements, invariant real varieties,
real orbit spaces}
\subjclass[2010]{14P05, 14P10, 20F55} 
\date{\today}
\thanks{T.~Friedl and R.~Sanyal were supported by the DFG-Collaborative
Research Center, TRR 109 ``Discretization in Geometry and Dynamics''.  
T.~Friedl received additional funding from a scholarship of the Dahlem
Research School at Freie Universit\"at Berlin.}
\begin{document}

\begin{abstract}
    Let $X$ be a nonempty real variety that is invariant under the action of a
    reflection group $G$. We conjecture that if $X$ is defined in terms of the
    first $k$ basic invariants of $G$ (ordered by degree), then $X$ meets a
    $k$-dimensional flat of the associated reflection arrangement.  We prove
    this conjecture for the infinite types, reflection groups of rank at most
    $3$, and $F_4$ and we give computational evidence for $H_4$. This is a
    generalization of Timofte's degree principle to reflection groups.  For
    general reflection groups, we compute nontrivial upper bounds on the
    minimal dimension of flats of the reflection arrangement meeting $X$ from
    the combinatorics of parabolic subgroups. We also give 
    generalizations to real varieties invariant under Lie groups.
\end{abstract}

\maketitle

\section{Introduction}\label{sec:intro}

A real variety $X \subseteq \R^n$ is the set of real points simultaneously
satisfying a system of polynomial equations with real coefficients, that
is, 
\[
    X \ = \ \VarR(f_1,\dots,f_m) \ := \ \{ \p \in \R^n : f_1(\p) = f_2(\p) =
    \cdots = f_m(\p) = 0 \},
\]
for some $f_1,\dots,f_m \in \R[\x] := \R[x_1,\dots,x_n]$.  In contrast to
working over an algebraically closed field, the question if $X \neq \emptyset$
is considerably more difficult to answer, both theoretically and in practice;
see~\cite{roy}. Timofte~\cite{tim} studied real varieties invariant under the
action of the symmetric group $\SymGrp_n$, and proved an
interesting structural result. A $\SymGrp_n$-invariant variety can be defined in terms
of \Defn{symmetric} polynomials, that is, polynomials $f \in \R[\x]$ such that
$f(x_{\tau(1)}, \dots, x_{\tau(n)}) = f(x_1,\dots,x_n)$ for all permutations
$\tau \in \SymGrp_n$. Recall that the fundamental theorem of symmetric
polynomials states that a polynomial $f$ is symmetric if and only if 
$f$ is a polynomial in the  elementary symmetric polynomials $e_1,\dots,e_n$. Let us call a $\SymGrp_n$-invariant variety $X$
\Defn{$\boldsymbol k$-sparse} if $X = \VarR(f_1,\dots,f_m)$ for some symmetric polynomials
$f_1,\dots,f_m \in \R[e_1,\dots,e_k]$.

\begin{thm}[\cite{tim}]\label{thm:Sn-degree}
    Let $X \subseteq \R^n$ be a nonempty $\SymGrp_n$-invariant real variety.  If
    $X$ is $k$-sparse, then there is a point $\p \in X$ with at most $k$
    distinct coordinates.
\end{thm}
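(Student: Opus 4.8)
The plan is to reduce the theorem to a statement about the fibers of the polynomial map $\pi_k := (e_1,\dots,e_k)\colon \R^n \to \R^k$ and then to produce the desired point as the solution of an optimization problem. First observe that if $X$ is $k$-sparse, say $X = \VarR\bigl(g_1(e_1,\dots,e_k),\dots,g_m(e_1,\dots,e_k)\bigr)$, then $X$ is a union of fibers of $\pi_k$: if $\p \in X$ and $\pi_k(\q)=\pi_k(\p)$, then each $g_j$ vanishes at $\q$ as well, so $\q \in X$. Hence it suffices to show that every nonempty fiber $\pi_k^{-1}(\pi_k(\p))$ contains a point with at most $k$ distinct coordinates. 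By Newton's identities $\R[e_1,\dots,e_k] = \R[p_1,\dots,p_k]$, where $p_i := x_1^i+\cdots+x_n^i$ is the $i$-th power sum, so the fibers of $\pi_k$ coincide with those of $(p_1,\dots,p_k)$, and I will work with the latter. The cases $k \geq n$ (every point qualifies) and $k=1$ (the point $\tfrac{1}{n}p_1(\p)\cdot(1,\dots,1)$ lies in the fiber) are immediate, so assume $2 \leq k < n$.

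Fix $\p \in X$, set $a_i := p_i(\p)$, and consider
\[
    V \ := \ \{ \q \in \R^n : p_1(\q)=a_1,\ \dots,\ p_k(\q)=a_k \}.
\]
This set is nonempty, as it contains $\p$, and it is closed; since $k \geq 2$ it is contained in the compact set $\{p_2 = a_2\}$ (note $a_2 = p_2(\p) \geq 0$), hence $V$ is compact. Let $\q^\ast \in V$ be a point maximizing the continuous function $p_{k+1}$ on $V$. Then $\q^\ast \in V \subseteq X$, so it remains only to show that $\q^\ast$ has at most $k$ distinct coordinates.

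Suppose for contradiction that $\q^\ast$ has at least $k+1$ distinct coordinates. The gradients $\nabla p_i(\q^\ast) = i\cdot\bigl((q^\ast_1)^{i-1},\dots,(q^\ast_n)^{i-1}\bigr)$ for $i=1,\dots,k$ span the same subspace as the first $k$ rows of the Vandermonde matrix built from the coordinates of $\q^\ast$; since $\q^\ast$ has at least $k$ distinct coordinates, that matrix has rank $k$, so $\nabla p_1(\q^\ast),\dots,\nabla p_k(\q^\ast)$ are linearly independent. Thus the linear independence constraint qualification holds at $\q^\ast$, and the Lagrange multiplier conditions yield scalars $\lambda_1,\dots,\lambda_k$ with $\nabla p_{k+1}(\q^\ast) = \sum_{i=1}^k \lambda_i \nabla p_i(\q^\ast)$. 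Reading this equation in the $\ell$-th coordinate shows that every $q^\ast_\ell$ is a root of the univariate polynomial $P(t) = (k+1)t^k - \sum_{i=1}^k i\lambda_i t^{i-1}$, which has degree exactly $k$ and therefore at most $k$ distinct roots. Hence $\q^\ast$ has at most $k$ distinct coordinates, contradicting the assumption; this proves the claim.

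The main content is the optimization argument of the last paragraph; the only delicate point is the passage through the Lagrange conditions, handled here by first verifying the constraint qualification via the Vandermonde structure. Alternatively one may invoke the Fritz John conditions with no constraint qualification at all: they produce a nonzero tuple $(\mu_0,\mu_1,\dots,\mu_k)$ with $\mu_0\nabla p_{k+1}(\q^\ast) = \sum_{i=1}^k \mu_i \nabla p_i(\q^\ast)$, and the coordinatewise polynomial $\mu_0(k+1)t^k - \sum_{i=1}^k i\mu_i t^{i-1}$ is not identically zero (its vanishing would force $(\mu_0,\mu) = 0$), hence has degree at most $k$ and at most $k$ roots. The hypothesis $k\ge 2$ is precisely what makes $V$ compact so that the maximum of $p_{k+1}$ is attained, the degenerate case $k=1$ being treated separately above. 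Finally, one may phrase the conclusion geometrically: a point with at most $k$ distinct coordinates lies on a flat of dimension at most $k$ of the braid arrangement $\{x_i = x_j\}_{i<j}$, the reflection arrangement of $\SymGrp_n$ — which is the form in which the statement is generalized to arbitrary reflection groups in the sequel.
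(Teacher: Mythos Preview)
Your proof is correct and follows essentially the same route as the paper: reduce to a principal fiber, use $p_2$ for compactness, maximize $p_{k+1}$, and conclude from the rank deficiency of the Jacobian of $(p_1,\dots,p_{k+1})$ at the maximizer via its Vandermonde structure---you phrase this last step through Lagrange multipliers and a univariate degree-$k$ polynomial rather than through vanishing $(k+1)$-minors, and you invoke Newton's identities where the paper appeals to its Lemma~\ref{lem:ind}. Your separate treatment of $k=1$ is a point where you are slightly more careful than the paper, which tacitly works with the essential group $A_{n-1}$ whose first basic invariant already has degree~$2$.
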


Viewing the symmetric group $\SymGrp_n$ as a reflection group in $\R^n$ yields
a sound geometric perspective on this result: As a group of linear
transformations, $\SymGrp_n$ is generated by reflections in the hyperplanes
$H_{ij} = \{\p : p_i = p_j\}$ for $1 \le i < j \le n$. The ambient space
$\R^n$ is stratified by the arrangement of reflection hyperplanes $\H = \{
H_{ij} : i < j \}$.  The closed strata $\H_k \subseteq \R^n $ are the
intersections of $n-k$ linearly independent reflection hyperplanes. Timofte's
result then states that a $k$-sparse variety $X$ is nonempty if and only if $X
\cap \H_k \neq \emptyset$.  Such a point of view can be taken for general real
reflection groups and the aim of this paper is a generalization of
Theorem~\ref{thm:Sn-degree}.

A (real) \Defn{reflection group} $G$ acting on $V \cong \R^n$ is a finite group
of orthogonal transformations generated by reflections. The reflection group
$G$ is \Defn{irreducible} if $G$ is not the product of two nontrivial
reflection groups.  Associated to $G$ is its \Defn{reflection arrangement}
\[
    \H \ = \ \H(G)  \ := \ \{ H = \ker g : g \in G \text{ reflection} \}. 
\] 
The \Defn{flats} of $\H$ are the linear subspaces arising from intersections
of hyperplanes in $\H$. The arrangement of linear hyperplanes stratifies $V$
with strata given by
\[
    \H_i \ = \ \H_i(G) \ := \ \{ \p \in V : \p  \text{ is contained in a flat
    of dimension $i$} \}.
\]
In particular, $\H_n = V$. We call $G$ \Defn{essential} if $G$ does not fix a
nontrivial linear subspace or, equivalently, if $\H_0 = \{0\}$. If $G$ is
essential, then the \Defn{rank} of $G$ is $ \rk(G) := \dim V$.  Reflection
groups naturally occur in connection with Lie groups/algebras and are
well-studied from the perspective of geometry, algebra, and
combinatorics~\cite{hum, FH91, BB05}.  A complete
classification of reflection groups can be given in terms of Dynkin diagrams
(see~\cite{hum}). There are four infinite families of irreducible 
reflection groups $\SymGrp_n \cong A_{n-1},B_n,D_n,I_2(m)$ and six
exceptional reflection groups $H_3, H_4, F_4, E_6, E_7,$ and $E_8$.

The linear action of $G$ on $V$ induces an action on the symmetric algebra
$\R[V]\cong\R[x_1,\ldots,x_n]$ by $g\cdot f(\x):=f(g^{-1}\cdot \x)$.
Chevalley's Theorem~\cite[Ch.~3.5]{hum} states that the ring $\R[V]^G$ of
polynomials invariant under $G$ is generated by algebraically independent
homogeneous polynomials $\pi_1,\pi_2,\dots,\pi_n \in \R[V]$.  The collection
$\pi_1,\dots,\pi_n$ is called a set of \Defn{basic invariants} for $G$. The
basic invariants are not unique, but their degrees $d_i(G) := \deg \pi_i$ are.
Throughout, we will assume that the basic invariants are labelled such that
$d_1 \le d_2 \le \cdots \le d_n$. In accord with $\SymGrp_n$-invariant
varieties, we call a $G$-invariant variety $X = \VarR(f_1,\dots,f_m)$
\Defn{$\boldsymbol k$-sparse} if $f_1,\dots,f_m$ can be chosen in
$\R[\pi_1,\dots,\pi_k]$ for some choice of basic invariants
$\pi_1,\dots,\pi_n$ ordered by nondecreasing degrees. The following is the
main result of the paper.

\begin{thm}\label{thm:main}
    Let $G$ be a reflection group of type $I_2(m), A_{n-1},B_n,D_n, H_3,$ or
    $F_4$ and $X$ a nonempty $G$-invariant real variety. If $X$ is $k$-sparse,
    then $X \cap \H_k(G) \neq \emptyset$.
\end{thm}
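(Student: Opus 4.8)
The plan is to pass to the orbit map, reduce to the behaviour of $X$ on a single flat of $\H(G)$, and induct on $\rk(G)$. I would begin with reductions. A product lemma reduces to irreducible $G$: the basic invariants of $G_1\times G_2$ are the degree-sorted union of those of $G_1$ and $G_2$, and via the orbit-space reformulation below the statement for $G_1\times G_2$ follows from those for $G_1$ and $G_2$. The case $k\ge\rk(G)$ is trivial since $\H_k(G)=V$, and $k\in\{0,1\}$ is immediate: a nonempty $0$-sparse variety is $V$ itself and contains $\0$ (or the fixed flat), while a nonempty $1$-sparse variety is a union of level sets of the quadratic invariant $\pi_1=\lambda\lVert\x\rVert^2$ (for $\SymGrp_n$, of $e_1$), each meeting every reflection line. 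So one may assume $2\le k<\rk(G)$, hence $\rk(G)\ge 3$ and $G\in\{A_{n-1}\,(n\ge 4),B_n,D_n\,(n\ge 3),H_3,F_4\}$.

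Next I would reformulate through the orbit map $\pi=(\pi_1,\dots,\pi_n)\colon V\to\R^n$. Since $X$ is $G$-invariant and $k$-sparse, $X=\pi^{-1}(Z\times\R^{n-k})$ for a real variety $Z\subseteq\R^k$; and for $\p\in V$ the gradients $\nabla\pi_1(\p),\dots,\nabla\pi_n(\p)$ lie in $\mathrm{Fix}(G_\p)$, the smallest flat through $\p$, and span it, so by the Procesi--Schwarz theory $\p\in\H_k(G)$ exactly when the Gram matrix $\big(\langle\nabla\pi_i(\p),\nabla\pi_j(\p)\rangle\big)_{i,j}$ has rank at most $k$, a condition that descends to $\pi(\p)$. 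Hence the theorem is equivalent to the assertion that every fibre of the coordinate projection $\R^n\to\R^k$ onto the first $k$ coordinates which meets the orbit space $\pi(V)$ also meets $\pi(\H_k(G))$.

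For the inductive step, suppose $X\neq\emptyset$ and pick a flat $F$ of $\H(G)$ of minimal dimension $d$ among those meeting $X$; the goal is $d\le k$, so assume $d\ge k+1$. Let $\overline N$ be the reflection group induced on $F$ by $\mathrm{Stab}_G(F)$ modulo its pointwise stabiliser, with basic invariants $\overline\pi_1,\dots,\overline\pi_d$ of degrees $\overline d_1\le\cdots\le\overline d_d$; its reflection arrangement is the restriction of $\H(G)$ to $F$, with flats exactly the flats of $\H(G)$ contained in $F$. Then $X\cap F$ is a nonempty $\overline N$-invariant variety which, by minimality of $d$, meets no proper flat of $\H(\overline N)$. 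Each $\pi_j|_F$ is a homogeneous $\overline N$-invariant of degree $\le d_j(G)\le d_k(G)$, so it lies in $\R[\overline\pi_1,\dots,\overline\pi_r]$ with $r:=\#\{\ell:\overline d_\ell\le d_k(G)\}$, and therefore $X\cap F$ is $r$-sparse for $\overline N$. Inspecting the parabolic subgroups of the six named types, $\overline N$ is again a product of groups from the list, of rank $d<\rk(G)$; so if $r<d$ the inductive hypothesis (together with the product lemma) forces $X\cap F$ to meet $\H_r(\overline N)$, i.e.\ a flat of $\H(G)$ of dimension $\le r<d$ -- contradicting the minimality of $d$.

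This leaves the degenerate case $r=d$, in which $\overline N$ provides no leverage and one must show directly that the nonempty variety $X\cap F$ is nonetheless forced onto a reflection hyperplane of $\overline N$ -- a statement false over $\C$, so the argument must exploit real algebraicity and compactness. Since $d_1(G)=2$ and $\pi_1=\lambda\lVert\x\rVert^2$ (for $\SymGrp_n$, using $\pi_1=e_1$ and $\pi_2$ together), the level sets $\{\pi_1|_F=c_1,\dots,\pi_k|_F=c_k\}$ in $F$ are compact; I would maximise a generic invariant from the span of $\pi_{k+1},\dots,\pi_n$ over such a level set inside $X\cap F$ and combine the critical-point equations with the description of $\pi(F)$ as a positive-semidefiniteness locus to pin a maximiser onto the boundary stratum, i.e.\ onto a reflection hyperplane of $\overline N$. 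For $A_{n-1},B_n,D_n$ the arrangements, parabolic subgroups, and discriminants are explicit enough to run this uniformly in $n$ (recovering Timofte's Theorem~\ref{thm:Sn-degree} as the type-$A$ case); for $H_3$ and $F_4$ only finitely many flats and reduced groups occur, to be checked one at a time, with computer assistance for $F_4$. I expect this real-algebraic step -- forcing the sparse invariant variety on a flat onto a sub-flat -- to be the main obstacle; it is precisely where the method stalls for $H_4$ (hence only numerical evidence there) and for $E_6,E_7,E_8$.
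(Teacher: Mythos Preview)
Your inductive scheme has two structural gaps. First, the claim that the normalizer quotient $\bar N=\mathrm{Stab}_G(F)/G_F$ acts on $F$ as a reflection group whose arrangement equals the restriction $\H(G)|_F$ is false. For $G=\SymGrp_5$ and the $3$-flat $F=\{x_1=x_2\}$, one computes $\bar N\cong\SymGrp_3$ (permuting $x_3,x_4,x_5$) with only three reflecting hyperplanes in $F$, whereas $\H(\SymGrp_5)|_F$ has six and is an $A_3$-arrangement; the group $\SymGrp_4$ with that arrangement does not act on $F$ through $\SymGrp_5$, so $X\cap F$ need not be invariant under it. Second, even granting the correct (small) $\bar N$, the induction on $\rk(G)$ does not close: if the minimal flat is $F=V$ (equivalently $X\cap\H_{n-1}=\emptyset$), then $\bar N=G$ and the appeal to the inductive hypothesis is circular. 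On proper flats the degenerate case $r=d$ is typical rather than exceptional; in the $\SymGrp_5$ example with $k=2$ the degrees of $\bar N$ on $F$ are $1,2,3$ while $d_2(A_4)=3$, so $r=3=d$. Thus essentially all of the content collapses into your ``degenerate case'', for which you only sketch an optimization argument. That sketch is precisely where the difficulty lies, and it is genuinely group-specific: Example~\ref{ex:babacon} shows that for $D_5$ the rank of $\Jac_\p(\pi_1,\dots,\pi_{k+1})$ can drop at a point \emph{not} in $\H_k$, so a uniform Lagrange-multiplier argument cannot work.

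The paper does not attempt a unified induction on flats. For $A_{n-1}$ and $B_n$ it maximizes the next power sum over a principal variety $X_k(\p)$ and reads off membership in $\H_k$ from the vanishing of all Vandermonde minors of the Jacobian; for $D_n$ this Jacobian criterion fails (Remark~\ref{rem:babacon}) and a separate case analysis using $e_n$ and $e_{l+1}$ is carried out. For $I_2(m)$ and $H_3$ the only nontrivial value is $k=\rk(G)-1$, handled by the orbit-space argument of Theorem~\ref{thm:codim1}: in $\Ospace$ one moves along the line through $\pi(\p)$ in the missing coordinate direction until hitting $\partial\Ospace=\pi(\H_{n-1})$. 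For $F_4$ only $k=2$ remains, and the proof explicitly computes the minimum and maximum of the degree-$6$ invariant $\pi_2$ over $S^3$ and verifies that both extrema are attained in $\H_1(F_4)\subseteq\H_2(F_4)$.
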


Since the first basic invariant of an essential reflection group is a scalar
multiple of $p_2(\x) = \|\x\|^2$, Theorem~\ref{thm:main} is trivially true for
reflection groups of rank $\le 2$. The infinite families $A_{n-1}$, $B_n$, and
$D_n$ are treated in Section~\ref{sec:infinite}.  Timofte's original proof and
its simplification given by the second author in~\cite{rie} use properties of
the symmetric group that are not shared by all reflection groups (such as
$D_n$) and we highlight this difference in Example~\ref{ex:babacon} and
Remark~\ref{rem:babacon}.  In Section~\ref{sec:codim1}, we prove the following
general result that implies Theorem~\ref{thm:main} in the case $k=n-1$. 

\begin{thm}\label{thm:all-but-one}
    Let $G$ be an essential reflection group of rank $n$ and $X =
    \VarR(f_1,\dots,f_m)$ nonempty. If there is $j \in  \{1,\dots,n\}$ such $f_1,\dots,f_m \in
    \R[\pi_i : i \neq j]$, then $X \cap \H_{n-1}(G) \neq \emptyset$. 
\end{thm}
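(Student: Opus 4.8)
We need to prove: if $G$ is an essential reflection group of rank $n$, $X = V_\R(f_1,\dots,f_m)$ is nonempty, and all $f_i$ lie in $\R[\pi_i : i \neq j]$ for some fixed $j$, then $X$ meets $\mathcal{H}_{n-1}(G)$.

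Let me think about the structure. $X$ is $G$-invariant (being a variety defined by polynomials in some of the basic invariants — actually wait, we need $X$ to be $G$-invariant; polynomials in invariants are invariant, so $X$ is automatically $G$-invariant). The quotient map $\pi = (\pi_1,\dots,\pi_n): V \to \R^n$ sends $V$ onto the orbit space $V/G$. The reflection arrangement's union $\bigcup \mathcal{H}$ maps to the discriminant — actually, $\mathcal{H}_{n-1}(G)$, the union of all reflection hyperplanes, is exactly $\pi^{-1}(\text{discriminant hypersurface})$... hmm, let me be careful. $\mathcal{H}_{n-1}$ is the set of points lying on at least one reflection hyperplane, equivalently points with nontrivial stabilizer.

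Key idea: Consider the real variety $X$. It's invariant, so $\pi(X) \subseteq \R^n$ is a subset of the orbit space $\mathcal{S} = \pi(V)$. The hypothesis says $X = \pi^{-1}(\pi(X))$ and $\pi(X) = \{y \in \mathcal{S} : g_1(y) = \dots = g_m(y) = 0\}$ where $g_i$ depends only on the coordinates $y_i$, $i \neq j$ (i.e., $f_i = g_i \circ \pi$ with $g_i \in \R[y_i : i \neq j]$). So $\pi(X)$ is a "cylinder" over the $j$-th coordinate direction intersected with $\mathcal{S}$: if $(y_1,\dots,y_n) \in \pi(X)$ then changing $y_j$ arbitrarily keeps us in the zero set of the $g_i$, as long as we stay in $\mathcal{S}$.

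\textbf{The plan.} Pick $\p \in X$. We want to move $\p$, staying inside $X$, until it hits a reflection hyperplane, i.e., until its $G$-stabilizer becomes nontrivial, equivalently until $\pi(\p)$ hits the boundary $\partial \mathcal{S}$ of the orbit space (the orbit space $\mathcal{S}$ is a full-dimensional closed region whose boundary is exactly the image of $\mathcal{H}_{n-1}$; this is standard — see Procesi--Schwarz type descriptions of real orbit spaces of reflection groups). So it suffices to show: starting from $\q = \pi(\p) \in \mathcal{S}$, we can vary the $j$-th coordinate $q_j$ (keeping the others fixed, which keeps us in $\pi(X)$) and reach $\partial \mathcal{S}$. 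The line $\ell = \{ \q + t e_j : t \in \R \}$ lies entirely in $\pi(X)$ wherever it meets $\mathcal{S}$. Since $\mathcal{S}$ is closed and — crucially — not all of $\R^n$ (it's a proper full-dimensional cone-like region, as $G$ is essential so $\pi$ is not surjective onto $\R^n$... in fact $\mathcal{S}$ is closed, semialgebraic, full-dimensional, and its complement is nonempty), the segment $\ell \cap \mathcal{S}$ is a nonempty closed subset of the line $\ell$. If it is bounded in the $+e_j$ or $-e_j$ direction, its endpoint lies on $\partial \mathcal{S}$ and we are done. If $\ell \cap \mathcal{S}$ is all of $\ell$ (unbounded both ways), we argue this cannot happen: one shows that $\mathcal{S}$ contains no affine line, because $\mathcal{S}$ is a pointed region. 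Here one uses that $\pi_1$ is (a scalar multiple of) $\|\x\|^2 \ge 0$, so $y_1 \ge 0$ on $\mathcal{S}$ with equality only at the origin; hence if $j \neq 1$, the line $\ell$ has a fixed value of $y_1 = q_1$, and along $\ell$ one coordinate among $y_i$, $i\neq j$, controls a quantity that forces boundedness — more precisely the preimage $\pi^{-1}(\ell)$ is contained in the sphere $\|\x\|^2 = c q_1$ (a scalar multiple), which is compact, so $\pi^{-1}(\ell \cap \mathcal{S})$ is compact, hence $\ell \cap \mathcal{S}$ is compact, hence has an endpoint on $\partial\mathcal{S}$. If $j = 1$, then the line $\ell$ varies $y_1$ while fixing $y_2,\dots,y_n$; here boundedness is more delicate, but note $y_1 \ge 0$ on $\mathcal{S}$ already gives a lower bound, and one shows there is also an upper bound, e.g. because fixing a top-degree invariant together with intermediate ones constrains $\|\x\|$. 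In fact in the $j=1$ case decreasing $t$ we reach $y_1 = 0$ or earlier hit $\partial\mathcal{S}$; the point $y_1 = 0$ forces $\p = 0$, which lies on every hyperplane, so $\p \in \mathcal{H}_{n-1}$, done.

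\textbf{Main obstacle.} The crux is (i) identifying $\partial \mathcal{S}$ with $\pi(\mathcal{H}_{n-1}(G))$ — i.e. that leaving the interior of the orbit space is equivalent to acquiring a nontrivial reflection stabilizer — which requires a careful description of the real orbit space of a reflection group (the interior of $\mathcal{S}$ being the image of points with trivial stabilizer, the free locus), and (ii) the boundedness/properness argument showing the line $\ell$ in the chosen coordinate direction actually meets $\partial\mathcal{S}$ rather than running off to infinity inside $\mathcal{S}$. Step (ii) is where essentiality and the role of $\pi_1 = \|\x\|^2$ enter decisively: for $j \neq 1$ it is immediate by compactness of spheres; for $j = 1$ one must rule out $\mathcal{S}$ containing a ray in the positive $e_1$ direction over a fixed $(y_2,\dots,y_n)$, which follows since along such a ray the corresponding points $\x$ would have $\|\x\| \to \infty$ while all higher invariants $\pi_2(\x),\dots,\pi_n(\x)$ stay bounded — impossible because $\pi_1,\dots,\pi_n$ are a homogeneous system of parameters, so the only common zero of the positive-degree leading forms is the origin, forcing $\pi^{-1}(\text{bounded set})$ to be bounded. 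I expect (i) to be the technically heavier half and the place where one invokes the structure theory of reflection groups acting on $V$ (chambers, fundamental domains, the fact that the interior of a chamber maps homeomorphically onto $\mathrm{int}\,\mathcal{S}$), while (ii) is a soft compactness argument once the hsop property is in hand.
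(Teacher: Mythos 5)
Your proposal is correct and uses essentially the same mechanism as the paper: pass to the orbit space $\Ospace=\pi(V)$, use that $\pi^{-1}(\partial\Ospace)=\H_{n-1}(G)$, and push $\pi(\p)$ along the coordinate line in direction $e_j$ until it exits through $\partial\Ospace$, with line-freeness/boundedness of $\Ospace$ supplied by $\|\x\|^2$ being among the invariants (this is exactly the paper's Lemma~\ref{lem:S_linefree} and equation~\eqref{eqn:S_boundary}). The only difference is packaging: the paper derives the statement from the slightly stronger Theorem~\ref{thm:codim1} (defining polynomial merely \emph{linear} in $\pi_j$), which forces a maximize-over-a-sphere plus connectivity-of-$\H$ argument, whereas your hypothesis (no dependence on $\pi_j$ at all) lets you keep the whole segment $\ell\cap\Ospace$ inside $\pi(X)$ and walk straight to the boundary.
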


In particular, this result yields Theorem~\ref{thm:main} for all reflection
groups of rank $\le 3$. The group $F_4$ is treated in
Section~\ref{sec:codim1} and we provide computational evidence that
Theorem~\ref{thm:main} also holds for $H_4$. That supports the following
conjecture.

\begin{conj}\label{conj:main}
    Let $G$ be an irreducible and essential reflection group. Then any
    nonempty and $k$-sparse $G$-invariant real variety $X$ intersects $\H_k(G)$.
\end{conj}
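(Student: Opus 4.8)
The plan is to reduce Conjecture~\ref{conj:main} to a statement about the fibers of the map of basic invariants and then to attack that statement with a constrained-optimization argument in the spirit of Timofte and the second author~\cite{tim, rie}. Write $\Pi := (\pi_1,\dots,\pi_k)\colon V \to \R^k$. If $f_1,\dots,f_m \in \R[\pi_1,\dots,\pi_k]$, say $f_i = g_i(\pi_1,\dots,\pi_k)$, then $X = \VarR(f_1,\dots,f_m) = \Pi^{-1}(Z)$ with $Z := \VarR(g_1,\dots,g_m)\subseteq\R^k$, so $X$ is a union of fibers of $\Pi$; conversely every fiber $\Pi^{-1}(c)$ is $k$-sparse. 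Hence Conjecture~\ref{conj:main} is equivalent to: \emph{every nonempty fiber of $\Pi$ meets $\H_k(G)$}. We may assume $k<n$, since $\H_n=V$. Fix $\p\in V$, set $c:=\Pi(\p)$ and $F:=\Pi^{-1}(c)\ni\p$. Since $G$ is irreducible and essential, $\pi_1$ is a nonzero scalar multiple of $\|\x\|^2$, so $F$ is contained in a sphere (or $F=\{\0\}\subseteq\H_k(G)$, in which case we are done); in particular $F$ is compact.

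Next, choose $\q\in F$ at which the basic invariant $\pi_{k+1}$ attains its minimum on $F$. If $\q$ is a smooth point of $F$ at which $\nabla\pi_1(\q),\dots,\nabla\pi_k(\q)$ are linearly independent, then the Lagrange multiplier conditions give that $\nabla\pi_{k+1}(\q)$ lies in the linear span of $\nabla\pi_1(\q),\dots,\nabla\pi_k(\q)$, so $\rk\big(\nabla\pi_1(\q),\dots,\nabla\pi_{k+1}(\q)\big)\le k$. To turn this rank bound into membership in $\H_k(G)$, the plan is to establish the structural identity
\[
\rk\big(\nabla\pi_1(\p),\dots,\nabla\pi_j(\p)\big) \ = \ \min\{\, j,\ \dim L_\p \,\}
\qquad\text{for all } \p\in V,\ j\le n,
\]
where $L_\p$ is the smallest flat of $\H(G)$ containing $\p$ (equivalently, the fixed space of the pointwise stabilizer $G_\p=\{g\in G:g\p=\p\}$, a parabolic subgroup by Steinberg's theorem~\cite{hum}). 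For $j=n$ this is the classical fact that $\det\big(\nabla\pi_1,\dots,\nabla\pi_n\big)$ vanishes exactly on $\H_{n-1}(G)$, refined by recording the corank; for $G=\SymGrp_n$ with power sums it is the Vandermonde computation underlying Theorem~\ref{thm:Sn-degree}. Granting the displayed identity with $j=k+1$, the bound $\rk\le k$ forces $\dim L_\q\le k$, i.e.\ $\q\in\H_k(G)$, which finishes the argument under the smoothness hypothesis.

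Two gaps remain, and I expect the second to be the main obstacle. First, the minimizer $\q$ need not be a smooth point of $F$, so the naive Lagrange step is not justified; this is precisely the place where the proofs for $\SymGrp_n$ in~\cite{tim, rie} invoke Hankel-matrix positivity and Schur--Horn-type facts special to the symmetric group (compare Example~\ref{ex:babacon} and Remark~\ref{rem:babacon}). The plan here is to induct on $\rk G$: if $\q$ already lies on a proper stratum $\H_j(G)$ with $j\le n-1$, replace $G$ by the reflection group $\overline{G}$ induced on the flat $L:=L_\q$, whose reflection arrangement is the restriction $\{\, H\cap L : H\in\H(G),\ L\not\subseteq H \,\}$, and iterate; the base case $j=n-1$ is supplied by Theorem~\ref{thm:all-but-one} (which strictly implies Theorem~\ref{thm:main} for $k=n-1$).

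Second, and more seriously, carrying out both the rank identity and the inductive descent requires the basic invariants $\pi_1,\dots,\pi_n$ to be chosen \emph{adapted} to a flag of parabolic subgroups, so that the restrictions of $\pi_1,\dots,\pi_j$ to a flat $L$ faithfully reproduce the low-degree basic invariants of $\overline{G}_L$. The difficulty is to make such a choice compatible with the global degree ordering $d_1\le\cdots\le d_n$: one must control how the degree sequences of the parabolic subgroups of $G$ interleave with that of $G$ itself, which is governed by the combinatorics of parabolic subgroups analysed in the body of the paper. I expect this compatibility to be the crux, and it is exactly what forces the separate, partly computation-assisted verifications for $F_4$ and $H_4$ in Sections~\ref{sec:infinite} and~\ref{sec:codim1}.
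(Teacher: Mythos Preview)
The statement is Conjecture~\ref{conj:main}, which the paper does \emph{not} prove in general; it is established only for the groups listed in Theorem~\ref{thm:main}, by separate arguments. Your outline would, if it worked, prove the conjecture for all $G$, so it must break somewhere.

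The fatal gap is the ``structural identity''
\[
\rk\bigl(\nabla\pi_1(\p),\dots,\nabla\pi_j(\p)\bigr)\;=\;\min\{\,j,\ \dim L_\p\,\}.
\]
This is precisely the content of Corollary~\ref{cor:strongSteinberg}, which the paper proves \emph{only} for $\SymGrp_n$ and $B_n$, and Example~\ref{ex:babacon} is a direct counterexample for $D_5$: at $\p=(1,1,1,1,0)$ one has $\dim L_\p=2$ but $\rk\bigl(\nabla\pi_1(\p),\nabla\pi_2(\p)\bigr)=1$. By Lemma~\ref{lem:ind} the rank of this Jacobian is independent of the choice of basic invariants, so no ``adapted'' system of invariants can repair the identity. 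You cite Example~\ref{ex:babacon} and Remark~\ref{rem:babacon} but attribute them to the smoothness of the minimizer; in fact they kill the rank identity outright, and this is exactly why the paper's proof for $D_n$ abandons the Jacobian-rank route and argues instead via a case analysis with the auxiliary functions $e_{l+1}$ and $e_n$.

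Incidentally, your ``first gap'' is not actually a gap: at a minimizer $\q$ of $\pi_{k+1}$ on $F=\Pi^{-1}(c)$, either $\nabla\pi_1(\q),\dots,\nabla\pi_k(\q)$ are linearly independent (so $F$ is smooth at $\q$ and Lagrange applies) or they are already dependent (so the rank bound $\le k$ is automatic). The entire obstruction lies in deducing $\q\in\H_k(G)$ from the rank bound, and for general $G$ that deduction is simply false.
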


Proposition~\ref{prop:conj-equiv} provides a different geometric perspective
on Conjecture~\ref{conj:main} in terms of real orbit spaces and implies
(Proposition~\ref{prop:lower_bound}) that $X$ will in general not meet
$\H_{l}(G)$ for $l < k$.  In Section~\ref{sec:higher_codim}, we prove a weaker
form of Conjecture~\ref{conj:main} under an extra assumption on the defining
polynomials of $X$. In Section~\ref{sec:StrNum}, we obtain upper bounds on the
dimension of the stratum that meets $X$ in terms of the combinatorics of
parabolic subgroups of $G$. Our results generalize to varieties invariant
under the adjoint action of Lie groups and we explore this connection in
Section~\ref{sec:lie}.

Acevedo and Velasco~\cite{velasco} independently considered the related
problem of certifying nonnegativity of $G$-invariant homogeneous polynomials.
They show that low-degree forms (where the exact degree depends on the group)
are nonnegative if and only if they are nonnegative on $\H_{n-1}(G)$.
Questions of nonnegativity of polynomials $f \in \R[V]^G$ are subsumed by our
results. Let us call a $G$-invariant semialgebraic set $S \subseteq V$
$k$-sparse if $S$ is defined in terms of equations and inequalities with
polynomials in $\R[\pi_1,\dots,\pi_k]$.

\begin{prop}\label{prop:nonneg}
    Let $G$ be a reflection group for which
    Conjecture~\ref{conj:main} holds. Let $S \subseteq V$ be a $k$-sparse
    semialgebraic set and let $f \in \R[\pi_1,\dots,\pi_k]$. Then $f$ is
    nonnegative/positive on $S$ if and only if $f$ is nonnegative/positive on
    $\H_k(G) \cap S$.
\end{prop}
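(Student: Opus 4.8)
The plan is to argue by contraposition and reduce the statement for the semialgebraic set $S$ to a single application of Conjecture~\ref{conj:main} to an auxiliary $k$-sparse variety. One implication is immediate: since $\H_k(G) \cap S \subseteq S$, if $f$ is nonnegative (resp.\ positive) on $S$ then it is nonnegative (resp.\ positive) on $\H_k(G) \cap S$. For the converse it suffices to prove: if $f$ attains a negative value (resp.\ a value $\le 0$) somewhere on $S$, then it does so already on $\H_k(G) \cap S$.

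First I would normalize the description of $S$. As a $k$-sparse semialgebraic set, $S$ is a finite union $S = S_1 \cup \dots \cup S_r$ of basic semialgebraic sets, each of the form
\[
    S_j \ = \ \{ \p \in V : g_{j,1}(\p) = \dots = g_{j,a_j}(\p) = 0 \text{ and } h_{j,1}(\p) \ge 0, \dots, h_{j,b_j}(\p) \ge 0 \},
\]
with all $g_{j,i}, h_{j,i} \in \R[\pi_1,\dots,\pi_k]$; strict inequalities are handled verbatim in what follows. (Passing to disjunctive normal form only ever replaces an atomic sign condition by its negation, which is again a sign condition on $\pm h \in \R[\pi_1,\dots,\pi_k]$, after splitting $h \ne 0$ into $h > 0 \vee h < 0$ if necessary.) Since $f$ lies in the same subring, it is enough to handle each $S_j$ individually; so assume $S = S_1$ and drop the index $j$.

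Now suppose $\q \in S$ with $f(\q) =: c$, where $c < 0$ in the nonnegativity case and $c \le 0$ in the positivity case. I would consider
\[
    X \ := \ \VarR\bigl(f - c,\ g_1, \dots, g_a,\ h_1 - h_1(\q), \dots, h_b - h_b(\q)\bigr).
\]
This variety is cut out by polynomials from $\R[\pi_1,\dots,\pi_k]$, hence is a nonempty ($\q \in X$) $k$-sparse $G$-invariant real variety. Conjecture~\ref{conj:main}, which holds for $G$ by hypothesis, supplies a point $\p \in X \cap \H_k(G)$. For this $\p$ we have $g_i(\p) = 0$ and $h_i(\p) = h_i(\q) \ge 0$ for all $i$, so $\p \in S$, and $f(\p) = c$. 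Thus $f$ realizes the value $c$ on $\H_k(G) \cap S$, which is exactly what was needed, and both equivalences follow.

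Once Conjecture~\ref{conj:main} is in hand the argument is essentially bookkeeping; the only point needing a little care is the reduction to basic semialgebraic pieces together with the observation that freezing the values $h_i(\q)$ of the inequality polynomials keeps the auxiliary variety $X$ inside $\R[\pi_1,\dots,\pi_k]$ while forcing the produced point to lie in $S$. I would also remark that the case $S = V$ recovers and sharpens the nonnegativity results of Acevedo and Velasco~\cite{velasco}.
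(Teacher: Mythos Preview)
Your argument is correct and follows the same contrapositive strategy as the paper: produce a $k$-sparse variety through the bad point $\q$ that stays inside $S$, apply Conjecture~\ref{conj:main}, and read off a bad point on $\H_k(G)\cap S$. The difference lies in how that auxiliary variety is chosen. You decompose $S$ into basic pieces and then freeze the values of \emph{all} the defining polynomials $g_i,h_i$ together with $f$ at $\q$. The paper instead uses a single universal object, the \emph{principal $k$-sparse variety}
\[
    X_k(\q) \ = \ \{\p\in V : \pi_i(\p)=\pi_i(\q),\ i=1,\dots,k\},
\]
on which every element of $\R[\pi_1,\dots,\pi_k]$ is automatically constant. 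Thus $X_k(\q)\subseteq S$ and $f\equiv f(\q)$ on $X_k(\q)$ without any need to look at a presentation of $S$ or to pass to disjunctive normal form. Your variety $X$ in fact contains $X_k(\q)$, so the paper is applying the conjecture to a smaller (and canonically defined) set. Both approaches yield the result; the paper's is shorter and presentation-independent, while yours makes explicit that the only input needed is the sign of $f(\q)$ and the sign pattern of the $h_i$ at $\q$.
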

\begin{proof}
    If $S$ is $k$-sparse, then the $G$-invariant variety 
    \begin{equation}\label{eqn:X_k}
        X_k(\q) \ := \ \{ \p \in V : \pi_i(\p) = \pi_i(\q) \text{ for } i =
        1,\dots,k \}
    \end{equation}
    is contained in $S$ for any $\q \in S$. Assume that there is a point $\q
    \in S$ with $f(\q) < 0$. By assumption $f = F(\pi_1,\dots,\pi_k)$ for some
    $F \in \R[y_1,\dots,y_k]$. Hence $f$ is negative (and constant) on $X_k(\q)
    \subseteq S$.  By construction $X_k(\q)$ is $k$-sparse and, since $G$
    satisfies Conjecture~\ref{conj:main}, $X_k(\q) \cap \H_k(G) \neq
    \emptyset$.
\end{proof}

The proof of Proposition~\ref{prop:nonneg} makes use of a key observation: It
suffices to consider invariant varieties of the form~\eqref{eqn:X_k} as any
$k$-sparse variety $X$ contains $X_k(\q)$ for all $\q \in X$.  We call
$X_k(\q)$ a \Defn{principal} $k$-sparse variety. Lastly,
let us emphasize again that we will work with \emph{real}
varieties exclusively. In particular, set-theoretically, every real variety $X
= \VarR(f_1,\dots,f_m)$ is the set of solutions to the equation $f(\x) = 0$
for $f = f_1^2 + f_2^2 + \cdots + f_m^2$.

\textbf{Acknowledgements.} We are much indebted to Christian Stump for the
many helpful discussions regarding the combinatorics of reflection groups and
their invariants. We also thank Florian Frick and Christian Haase for an
interesting but fruitless afternoon of orbit spaces. We also thank Mareike
Dressler for help with GloptiPoly and Vic Reiner for suggesting Chevalley's
Restriction Theorem.

\section{The infinite families $A_{n-1},B_n,$ and $D_n$}\label{sec:reflection}\label{sec:infinite}

In this section, we prove Theorem~\ref{thm:main} for the reflection groups of
type $A_{n-1},B_n,$ and $D_n$. The symmetric group $\SymGrp_n$ acts on $\R^n$
but is not essential as it fixes $\R\mathbf{1}$. The restriction to $\{ \x
\in \R^n : x_1 + \cdots + x_n = 0\}$ is the essential reflection group of type
$\mathbf{A_{n-1}}$. The reflection arrangement $\H(\SymGrp_n)$ was described in the
introduction. The $k$-stratum $\H_k(\SymGrp_n)$ is given by the
points $\p \in \R^n$ that have at most $k$ distinct coordinates. A set of basic
invariants is given by the \Defn{elementary symmetric polynomials}
\newcommand\PowSum{s}
\[
    e_k(\x) \ := \ \sum_{1 \le i_1 < \cdots < i_k \le n} x_{i_1}\cdots x_{i_k}
\]
or, alternatively, by the \Defn{power sums} 
\[
    \PowSum_k(\x) \ := \ x_1^k + x_2^k + \cdots + x_n^k,
\]
for $k=1,\dots,n$.  The group $\mathbf{B_n} = \SymGrp_n \rtimes \Z^n_2$ acts
on $V = \R^n$ by \emph{signed} permutations with reflection hyperplanes $\{x_i
= \pm x_j\}$ and $\{ x_i = 0 \}$ for $1 \le i < j \le n$. A point $\p$ lies in
$\H_i(B_n)$ if and only if $(|p_1|,\dots,|p_n|)$ has at most $i$ distinct
nonzero coordinates. A set of basic invariants is given by $\pi_i(\x) =
\PowSum_{2i}(\x) = \PowSum_i(x_1^2,\dots,x_n^2)$.  The index-$2$ subgroup
$\mathbf{D_n}$ of $B_n$ given by the semidirect product of $\SymGrp_n$ with
`even sign changes' yields a reflection group with reflection hyperplanes $\{
x_i = \pm x_j\}$ for $1 \le i < j \le n$. The $k$-stratum of $D_n$ is a bit
more involved to describe: denote by $M$ the set of all $\p \in \R^n$ with
exactly one zero coordinate. Then
\begin{equation}\label{eqn:M}
    \H_k(D_n) \ = \ (\H_k(B_n) \setminus M) \cup (\H_{k-1}(B_n) \cap M).
\end{equation}
The invariant that distinguishes $D_n$ from $B_n$ is given by $e_n(\x) =
x_1x_2\cdots x_n$. A set of basic invariants for $D_n$ are
$\pi_1(\x),\dots,\pi_n(\x)$ with
\begin{equation}\label{eqn:Dn-invs}
    \pi_k(\x) \ := \ 
    \begin{cases}
        \PowSum_{2k}(\x) & \text{ for } 1 \le k \le
        \lfloor\frac{n}{2}\rfloor,\\
        e_n(\x) & \text{ for } k  = 
        \lfloor\frac{n}{2}\rfloor + 1, \text{ and }\\
        \PowSum_{2k-2}(\x) & \text{ for } \lfloor\frac{n}{2}\rfloor + 1 < k
        \le n.\\
    \end{cases}
\end{equation}

We start with the verification of Theorem~\ref{thm:main} for $\SymGrp_n$ which
is exactly Theorem~\ref{thm:Sn-degree}. The proofs for $B_n$ and $D_n$ will
rely on the arguments for $A_{n-1}$.
\newcommand\Jac{\mathrm{Jac}}

\begin{proof}[Proof of Theorem~\ref{thm:main} for $A_{n-1} \cong \SymGrp_n$]
    Let us first assume that $\pi_i(\x) = \PowSum_i(\x) = x_1^i + \cdots +
    x_n^i$ are the power sums for $i=1,\dots,n$. It suffices to show the claim
    for a principal $k$-sparse variety $X_k(\p_0)$ as defined
    in~\eqref{eqn:X_k}, i.e.\ that $X_k(\p_0) \cap \H_k(\SymGrp_n) \neq
    \emptyset$ for $\p_0 \in X$.  Since $\pi_2(\p) = \|\p\|^2$, we conclude
    that $X_k(\p_0)$ is compact and $\pi_{k+1}$ attains its maximum over
    $X_k(\p_0)$ in a point $\q$. At this point, the Jacobian
    $\Jac_\q(\pi_1,\dots,\pi_{k+1}) = \bigl(\nabla \PowSum_1(\q),\ldots,\nabla
    \PowSum_{k+1}(\q)\bigr)$ has rank $< k+1$.  We claim that $\q\in\H_k$ if
    and only if the Jacobian $J = \Jac_\q(\pi_1,\dots,\pi_{k+1})$ has rank $<
    k+1$. Indeed, up to scaling columns, $J$ is given by
    \[
        \begin{pmatrix}
            1   & 1   & \cdots & 1 \\
            q_1 & q_2 & \cdots & q_n \\
            \vdots & & &\vdots \\
            q_1^{k} &q_2^{k}&\cdots & q_n^{k} 
        \end{pmatrix}.
    \]
    The $k+1$ minors are thus Vandermonde determinants all of which vanish if
    and only if $\q \in \H_k$, by the description of $k$-strata for
    $\SymGrp_n$. For an arbitrary choice of basic invariants,
    the result follows from Lemma~\ref{lem:ind} below.
\end{proof}

\begin{lem}\label{lem:ind}
    Fix a reflection group $G$ acting on $V$. Let $\pi_1,\ldots,\pi_n$ and
    $\pi_1',\ldots,\pi_n'$ be two sets of basic invariants and let $1 \le k
    \le n$ such that $d_{k+1}>d_k$.  Then $ \R[\pi_1,\ldots,\pi_k] \ = \
    \R[\pi_1',\ldots,\pi_k'].$ Moreover, $\rk \Jac_\p(\pi_1,\dots,\pi_k) = \rk
    \Jac_\p(\pi_1',\dots,\pi_k')$ for all $\p\in V$.
\end{lem}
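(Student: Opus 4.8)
The plan is to use the grading on the invariant ring. By Chevalley's theorem $\R[V]^G = \R[\pi_1,\dots,\pi_n]$ is a polynomial ring, graded so that $\pi_j$ is homogeneous of degree $d_j$, and since the degree sequence depends only on $G$ we likewise have $\deg\pi_i' = d_i$ for all $i$. Everything rests on the elementary observation that $d_j > d_i$ whenever $i \le k < k+1 \le j$, which is immediate from $d_i \le d_k < d_{k+1} \le d_j$ together with the hypothesis $d_{k+1} > d_k$.

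First I would prove $\pi_i' \in \R[\pi_1,\dots,\pi_k]$ for every $i \le k$. Write $\pi_i' = F(\pi_1,\dots,\pi_n)$ for some $F \in \R[y_1,\dots,y_n]$. Assigning $y_j$ the weight $d_j$ turns $\R[y_1,\dots,y_n]$ into a graded ring, and since $\pi_1,\dots,\pi_n$ are algebraically independent we may replace $F$ by its weighted-homogeneous component $F_{d_i}$ of weight $d_i$ without changing the identity. Any monomial $y_1^{a_1}\cdots y_n^{a_n}$ appearing in $F_{d_i}$ satisfies $\sum_j a_j d_j = d_i$; if $a_j > 0$ for some $j \ge k+1$, then $\sum_j a_j d_j \ge d_{k+1} > d_k \ge d_i$, a contradiction. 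Hence $F_{d_i} \in \R[y_1,\dots,y_k]$ and $\pi_i' \in \R[\pi_1,\dots,\pi_k]$. Exchanging the roles of the two systems of basic invariants gives $\pi_i \in \R[\pi_1',\dots,\pi_k']$ for $i \le k$, so that $\R[\pi_1,\dots,\pi_k] = \R[\pi_1',\dots,\pi_k']$.

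For the Jacobian claim, regard $\phi := (\pi_1,\dots,\pi_k)$ and $\phi' := (\pi_1',\dots,\pi_k')$ as polynomial maps $V \to \R^k$. The previous paragraph packages the expressions of the $\pi_i'$ in terms of $\pi_1,\dots,\pi_k$ into a polynomial map $\Psi\colon\R^k\to\R^k$ with $\phi' = \Psi\circ\phi$, and symmetrically a polynomial map $\Phi$ with $\phi = \Phi\circ\phi'$. The chain rule gives $\Jac_\p(\phi') = \Jac_{\phi(\p)}(\Psi)\cdot\Jac_\p(\phi)$, hence $\rk\Jac_\p(\phi') \le \rk\Jac_\p(\phi)$ for every $\p\in V$; the reverse inequality follows from $\phi = \Phi\circ\phi'$, and the two ranks agree.

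I do not expect a genuine obstacle here. The only step that needs care is the weighted-homogeneity reduction: one must invoke the algebraic independence of the basic invariants to legitimately pass from the identity $\pi_i' = F(\pi_1,\dots,\pi_n)$ to its weight-$d_i$ part, and then track the inequality $d_{k+1} > d_k$ carefully to see that the generators $\pi_{k+1},\dots,\pi_n$ cannot contribute.
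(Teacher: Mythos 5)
Your proposal is correct and follows essentially the same route as the paper: express one system of basic invariants in the other, use homogeneity (via the weighted grading) together with algebraic independence to see that the transition polynomials involve only $y_1,\dots,y_k$, and then apply the chain rule to compare Jacobian ranks. The only cosmetic difference is that the paper deduces equality of ranks by noting the transition Jacobian is invertible, whereas you obtain the two inequalities from the two chain-rule factorizations; these are interchangeable.
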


\begin{proof}
    For every $1 \le i \le k$, $\pi_i = F_i(\pi'_1,\dots,\pi'_n)$ for some
    polynomial $F_i(y_1,\dots,y_n)$. Homogeneity and algebraic independence
    imply that $F_i \in \R[y_1,\dots,y_k]$. This shows the inclusion
    $\R[\pi_1,\dots,\pi_k] \subseteq \R[\pi_1',\dots,\pi_k']$.  Note that
    \[
        \Jac_{\p}(\pi_1,\dots,\pi_k) \ = \ \Jac_{\pi'(\p)}(F_1,\dots,F_k) \cdot
    \Jac_{\p}(\pi_1',\dots,\pi_k')\]
    for every $\p \in V$.  The same argument
    applied to $\pi'_i$ now proves the first claim and shows that
    $\Jac_{\pi'(\p)}(F_1,\dots,F_k)$ has full rank and this proves the second
    claim.
\end{proof}

We proceed to the reflection groups of type $B_n$.
\begin{proof}[Proof of Theorem~\ref{thm:main} for $B_n$]
    By Lemma~\ref{lem:ind} and the fact that the degrees $d_i(B_n)$ are all
    distinct, we may assume that $\pi_i(\x) = \PowSum_{2i}(\x)$ for
    $i=1,\dots,n$. Moreover, we
    can assume that $X$ is a principal $k$-sparse variety, that is,
    \[
        X \ = \ X_k(\p) \ = \ \{ \x \in \R^n : \PowSum_{2i}(\x) =
        \PowSum_{2i}(\p)
        \text{ for } i = 1,\dots,k \}.
    \]
    Since $X_k(\p) = X_k(\q)$ for all $\q \in X_k(\p)$, we can assume that 
    $\p = (p_1,\dots,p_r,0,\dots,0) \in X$ with the property that $p_1 \cdots
    p_r \neq 0$ and $r$ is minimal. 
    
    If $r = n$, then $X$ does not meet any of the coordinate hyperplanes $\{
    x_i = 0 \}$. Let $\q \in X$ be an extreme point of $\pi_{k+1}$ over $X$.
    At this point, the Jacobian $J = \Jac_\q(\pi_1,\dots,\pi_{k+1})$ does
    not have full rank and hence every maximal minor of
    \[
        J \ = \ 
        \begin{pmatrix}
            q_1 & q_2 & \cdots & q_n \\
            \vdots & & &\vdots \\
            q_1^{2k-1} &q_2^{2k-1}&\cdots & q_n^{2k-1} 
        \end{pmatrix}
    \]
    vanishes. Since $q_i \neq 0$ for all $i=1,\dots,n$, the Vandermonde
    formula implies that $(q_1^2,q_2^2,\dots,q_n^2)$ has at most $k$ distinct
    coordinates, which yields the claim.

    If $r < n$, we can restrict $X$ to the linear subspace $U = \{ \x \in
    \R^n : x_{r+1} = \cdots = x_n = 0\} \cong \R^r$. The set $X' := X \cap U
    \subseteq \R^r$
    is nonempty and, in particular, a $k$-sparse $B_r$-invariant variety that
    stays away from the coordinate hyperplanes in $\R^r$. By the previous
    case, there is a point $\q' \in X'$ such that
    $(|q'_1|,\dots,|q'_r|)$ has at most $k$ distinct coordinates. By
    construction, $\q = (\q',\mathbf{0}) \in X \cap \H_k(B_n)$, which proves the claim.
\end{proof}

The key to the proof of Theorem~\ref{thm:main} for $A_{n-1}$ and $B_n$ is the
strong connection between the strata $\H_k$ and the ranks of the Jacobians
$\Jac(\pi_1,\dots,\pi_{k+1})$.

\begin{cor}\label{cor:strongSteinberg}
    Let $G \in \{ \SymGrp_n, B_n\}$ and $\pi_1,\dots,\pi_n$ a set of basic
    invariants for $G$. Then a point $\q \in V$
    lies in $\H_k(G)$ for $0 \le k \le n-1$ if and only if
    $\Jac_{\p}(\pi_1,\dots,\pi_{k+1})$ has rank at most $k$.
\end{cor}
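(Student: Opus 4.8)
The plan is to promote to an equivalence valid at every point $\q \in V$ the Jacobian--stratum dictionary used (at extremal points) in the proofs of Theorem~\ref{thm:main} for $A_{n-1}$ and $B_n$; I would argue in two steps, first for a distinguished set of basic invariants and then reducing the general case to it. For Step~1, take $\pi_i = \PowSum_i$ when $G = \SymGrp_n$ and $\pi_i = \PowSum_{2i}$ when $G = B_n$. Rescaling row $i$ of $\Jac_\q(\pi_1,\dots,\pi_{k+1})$ by $1/i$ (resp.\ $1/(2i)$) and, in the $B_n$ case, dividing each column $j$ with $q_j \neq 0$ by $q_j$ and deleting the columns with $q_j = 0$ (which were identically zero), one obtains, without changing the rank, a Vandermonde matrix with columns $(1, t_j, t_j^2, \dots, t_j^k)^{\mathsf{T}}$, where $t_j := q_j$ and $j$ ranges over $\{1,\dots,n\}$ for $\SymGrp_n$, while $t_j := q_j^2$ and $j$ ranges over $\{\,j : q_j \neq 0\,\}$ for $B_n$. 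The maximal minors are the Vandermonde determinants $\prod_{a<b}(t_{j_b} - t_{j_a})$, so the rank equals $\min(k+1,\, \#\{\text{distinct values among the }t_j\})$, and it is $\le k$ exactly when there are at most $k$ such values. By the descriptions recalled in Section~\ref{sec:infinite} --- at most $k$ distinct coordinates for $\SymGrp_n$, at most $k$ distinct nonzero $|q_j|$ for $B_n$ --- this holds precisely when $\q \in \H_k(G)$, for every $0 \le k \le n-1$.

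For Step~2 I would run the argument of Lemma~\ref{lem:ind} with $k+1$ in place of $k$. If $k \le n-2$, the hypothesis $d_{k+2} > d_{k+1}$ is satisfied because the degrees of $\SymGrp_n$ (which are $1,2,\dots,n$) and of $B_n$ (which are $2,4,\dots,2n$) are pairwise distinct; the polynomials $F_1,\dots,F_{k+1}$ expressing one set of basic invariants through another then lie in $\R[y_1,\dots,y_{k+1}]$, their $(k+1)\times(k+1)$ Jacobian block is invertible, and hence $\rk \Jac_\q(\pi_1,\dots,\pi_{k+1})$ is independent of the choice of basic invariants. For $k = n-1$ the matrix in question is the full Jacobian $\Jac_\q(\pi_1,\dots,\pi_n)$; here it suffices that a change of variables between two sets of basic invariants is an $\R$-algebra automorphism of the polynomial ring $\R[V]^G$, hence a polynomial automorphism of $\R^n$ with constant nonzero Jacobian determinant. (Equivalently, one may cite the classical fact that $\det \Jac_\q(\pi_1,\dots,\pi_n)$ is, up to a nonzero scalar, the product of the linear forms cutting out the reflection hyperplanes, see~\cite{hum}, so that it vanishes iff $\q \in \H_{n-1}(G)$.) Combining Steps~1 and~2 gives the corollary.

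I do not expect a real obstacle: the corollary essentially records the Vandermonde computation of Step~1, already carried out above. The only slightly delicate points are the passage to an arbitrary set of basic invariants --- exactly what Lemma~\ref{lem:ind} provides --- and the boundary value $k = n-1$, where Lemma~\ref{lem:ind} does not apply verbatim, since there is no degree $d_{n+1}$, and one instead uses that the Jacobian of the basic invariants cuts out the reflection arrangement.
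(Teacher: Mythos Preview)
Your proposal is correct and follows the same route the paper takes: the corollary has no standalone proof in the paper but is explicitly presented as extracting the Vandermonde computation from the preceding proofs of Theorem~\ref{thm:main} for $A_{n-1}$ and $B_n$, together with Lemma~\ref{lem:ind} for the passage to arbitrary basic invariants. Your Step~1 is precisely that Vandermonde argument (in fact slightly more explicit than the paper for $B_n$, since you spell out the treatment of zero coordinates), and your Step~2 is Lemma~\ref{lem:ind}; the extra care at $k=n-1$ is harmless though not strictly necessary, since the proof of Lemma~\ref{lem:ind} goes through verbatim when $k+1=n$.
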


It is tempting to believe that such a statement holds true for all reflection
groups and, indeed, necessity follows from a well-known result of
Steinberg~\cite{stein}. However, the following example shows that
Corollary~\ref{cor:strongSteinberg} does not hold in general.

\begin{example}\label{ex:babacon}
    Consider the group $G = D_5$ acting on $\R^5$ and the point $\p =
    (1,1,1,1,0)$.  The point lies in
    $\H_2(D_5) \setminus \H_1(D_5)$, that is, $\p$ lies on exactly $3$ linearly
    independent reflection hyperplanes.  On the other hand, for any choice of
    basic invariants $\pi_1,\dots,\pi_5$ the gradients $\nabla_\p \pi_1,
    \nabla_\p \pi_2$ are linearly dependent.  Indeed, for $\pi_1 = \|x\|^2 =
    x_1^2 + \cdots + x_5^2$ and $\pi_2 = x_1^4 + \cdots + x_5^4$, this is easy
    to check and this extends to all choices of basic invariants using
    Lemma~\ref{lem:ind}.
\end{example}

\begin{rem}\label{rem:babacon}
    Example~\ref{ex:babacon} also serves as a counterexample to
    generalizations of Corollary~\ref{cor:strongSteinberg} to all finite
    reflection groups considered in~\cite[Lemma~1']{giv} (without a proof)
    and~\cite[Statement~3.3]{barbancon}.  Moreover, in the language of Acevedo
    and Velasco~\cite[Definition 7]{velasco}, it is the first example of a
    reflection group not satisfying the \emph{minor factorization condition}.  
\end{rem}

The following proof of Theorem~\ref{thm:main} for type $D_n$ does not rely on
an extension of Corollary~\ref{cor:strongSteinberg}.

\begin{proof}[Proof of Theorem~\ref{thm:main} for $D_n$]
    Let $\pi_1,\dots,\pi_n$ be a choice of basic invariants for $D_n$ and let
    $X = X_k(\q) \subseteq \R^n$ for some $\q \in \R^n$ and $1 \le k < n$.  If
    $n$ is odd or if $k \neq  \lfloor \frac{n}{2} \rfloor$, then, by
    Lemma~\ref{lem:ind}, we can assume that the basic invariants are 
    given by~\eqref{eqn:Dn-invs}.  If $n$ is even and $k = \lfloor \frac{n}{2}
    \rfloor$, then $\pi_{\lfloor \frac{n}{2} \rfloor}(\x) \ = \ \alpha
    \PowSum_{n}(\x) + \beta e_n(\x), $ for some $\beta \neq 0$.  We can also
    assume that $\q = (q_1,\dots,q_l,0,\dots,0)$ with $q_1\cdots q_l \neq 0$
    and $l$ maximal among all points in $X_k(\q)$. We distinguish two cases.

    \textbf{Case $l < n$:} In this case, $e_{n}(\x)$ is identically zero on
    $X_k(\q)$ and $X' := X_k(\q) \cap \{ \x : x_n = 0 \}$ is nonempty.  If $k
    \ge \lfloor \frac{n}{2} \rfloor + 1$, then we can identify 
    \[ 
        X' \ = \ \{ \x' \in \R^{n-1} : \PowSum_{2i}(\x',\0)= \PowSum_{2i}(\q)
        \text{ for } i = 1,\dots,k-1 \} .
    \] 
    Hence $X'$ is a real variety in $\R^{n-1}$ invariant under the action of
    $B_{n-1}$ and $X'$ is $(k-1)$-sparse. By
    Theorem~\ref{thm:main} for $B_{n-1}$, $X' \cap \H_{k-1}(B_{n-1}) \neq
    \emptyset$. The claim now follows the description of $\H_k(D_n)$ given 
    in~\eqref{eqn:M}.

    If $k < \frac{n}{2}$, consider the Jacobian of $\pi_1 = \PowSum_{2},\dots,
    \pi_k = \PowSum_{2k}$ and the $(l+1)$-th elementary symmetric polynomial
    $e_{l+1}(\x)$ at $\q$
    \begin{equation}\label{eqn:l_n-1}
        J \ = \ \Jac_\q(\PowSum_2,\dots,\PowSum_{2k},e_{l+1}) \ = \ 
        \begin{pmatrix}
            q_1   & q_2   & \cdots & q_l   & 0 & 0 & \cdots & 0 \\
            q_1^3 & q_2^3 & \cdots & q_l^3 & 0 & 0 & \cdots & 0  \\
            \vdots & \vdots & & \vdots & \vdots & \vdots & \vdots & \vdots \\
            q_1^{2k-1} & q_2^{2k-1} & \cdots & q_l^{2k-1} & 0  & 0 &
            \cdots & 0  \\
            0 & 0 & 0 & 0& q_1\cdots q_l & 0 & \cdots & 0
        \end{pmatrix}.
    \end{equation}
    We observe that the $(l+1)$-th elementary symmetric
    function $e_{l+1}(\x)$ is identically zero on $X_k(\q)$ and hence the gradients of 
    $\pi_1,\dots,\pi_k$ and $e_{l+1}$ are linearly dependent on $X_k(\q)$. In
    particular, the Jacobian $J$ has rank $\le k$. Since 
    $q_1 \cdots q_l \neq 0$, the Vandermonde minors imply
    \[
        \prod_{i,j\in I, i<j}{q_i^2-q_j^2} \ = \ 0 
    \]
    for any $I \subseteq \{1,\dots,l\}$ with $|I| = k$. This shows that $\q \in
    \H_{k-1}(B_n) \subseteq \H_k(D_n)$.

    If $k= \frac{n}{2}$, then $n$ is even and $X_k(\q)$ is cut
    out by $\PowSum_2,\dots, \PowSum_{n-2}$ and possibly $\PowSum_{n}$, since $e_{n}(\x)$ is identically zero on
    $X_k(\q)$. Thus the argument above
    remains valid.

    \textbf{Case $l = n$:} 
    If $k < \frac{n}{2}$, set $f := e_n$.  If $k \ge \lfloor \frac{n}{2}
    \rfloor +1$, set $f := \PowSum_{2k}$. For the special case that $n$ even
    and $k =\frac{n}{2}$, we set $f = e_n$ if $\alpha \neq 0$ and $f =
    \PowSum_{n}$ otherwise.  Let $\r \in X_k(\q)$ be a maximizer of $|f(\x)|$.
    In particular, $r_1\cdots r_n \neq 0$. Up to row and column operations,
    the Jacobian $J = \Jac_\q(\PowSum_2,\PowSum_4,\dots,\PowSum_{2k},f)$ is of
    the form
    \begin{equation}\label{eqn:l_n}
        J \ = \ \begin{pmatrix}
            r_1 & r_2 & \cdots & r_n \\
            r_1^3 & r_2^3 & \cdots & r_n^3 \\
            \vdots & \vdots & & \vdots \\
            r_1^{2k-1} & r_2^{2k-1} & \cdots & r_n^{2k-1} \\
            \widehat{r}_1 r_2\cdots r_n & r_1 \widehat{r}_2\cdots r_n & \cdots
            & r_1 r_2\cdots \widehat{r}_n
        \end{pmatrix},
    \end{equation}
    where $\widehat{r}_i$ is to be omitted from the product. Multiplying the
    $i$-th column by $r_i$ and dividing the last row by $r_1\cdots r_n$, we
    get a Vandermonde matrix of rank $\le k$. Hence $(|r_1|,\dots,|r_k|)$ has
    at most $k$ distinct entries. Since all entries are nonzero, it follows
    that $\r \in \H_k(D_n)$.
\end{proof}

The proof actually gives stronger implications for the $B_n$-case.

\begin{cor}\label{cor:stronger_Bn}
    Let $1\le k\le n-2$. Then every nonempty $B_n$-invariant, $k$-sparse
    variety $X$ meets $\H_k(D_n)$.
\end{cor}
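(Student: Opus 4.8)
The plan is to adapt the proof of Theorem~\ref{thm:main} for $D_n$. Since a $B_n$-invariant variety is cut out by power sums alone, the case distinction there (on $k$ versus $n/2$) disappears and the argument simplifies. First I would reduce to a principal variety: the degrees $d_i(B_n)$ are pairwise distinct, so by Lemma~\ref{lem:ind} the defining equations of $X$ may be taken in $\R[\PowSum_2,\dots,\PowSum_{2k}]$, and by the observation following Proposition~\ref{prop:nonneg} it suffices to prove the claim when $X=X_k(\q)=\{\x:\PowSum_{2i}(\x)=\PowSum_{2i}(\q)\text{ for }i=1,\dots,k\}$ for some $\q\in X$; this set is compact because $\PowSum_2=\|\x\|^2$ is one of the constraints. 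Next I would pick a representative $\q=(q_1,\dots,q_l,0,\dots,0)\in X$ with $q_1\cdots q_l\neq0$ and $l$ maximal, so that every point of $X$ has at most $l$ nonzero coordinates, and then split into three cases according to $l$.

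If $l\le k-1$, then $\q$ has at most $l\le k-1$ distinct nonzero coordinate absolute values, so $\q\in\H_{k-1}(B_n)$; since $k\le n-2$ it also has at least two zero coordinates, hence $\q\notin M$ and $\q\in\H_k(B_n)\setminus M\subseteq\H_k(D_n)$ by~\eqref{eqn:M}. If $l=n$, I would take a maximizer $\r\in X$ of $|e_n(\x)|=|x_1\cdots x_n|$; it exists by compactness and satisfies $|e_n(\r)|\ge|q_1\cdots q_n|>0$, so every coordinate of $\r$ is nonzero. As $X$ is a level set of $(\PowSum_2,\dots,\PowSum_{2k})$, the Jacobian $\Jac_\r(\PowSum_2,\dots,\PowSum_{2k},e_n)$ cannot have full rank $k+1$ (otherwise $e_n$ would be non-constant, hence not maximal in absolute value, on $X$ near $\r$), and the operations used for~\eqref{eqn:l_n} — scaling the $i$-th column by $r_i$ and the last row by $r_1\cdots r_n$ — turn it into a Vandermonde matrix in $r_1^2,\dots,r_n^2$. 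These squares therefore assume at most $k$ distinct, and nonzero, values, so $\r\in\H_k(B_n)\setminus M\subseteq\H_k(D_n)$.

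The substantive case is $k\le l\le n-1$. Here every point of $X$ has at most $l<n$ nonzero coordinates, so $e_{l+1}$ vanishes identically on $X$, and I would argue by contradiction. If $\q\notin\H_{k-1}(B_n)$, then $q_1^2,\dots,q_l^2$ contain at least $k$ distinct values, so by the Vandermonde formula $\Jac_\q(\PowSum_2,\dots,\PowSum_{2k})$ has full rank $k$; consequently $X$ is a smooth submanifold of dimension $n-k$ near $\q$, with tangent space $\ker\Jac_\q(\PowSum_2,\dots,\PowSum_{2k})$. Since $e_{l+1}$ is constant on $X$, its gradient $\nabla e_{l+1}|_\q$ is orthogonal to that tangent space and hence lies in the span of $\nabla\PowSum_2|_\q,\dots,\nabla\PowSum_{2k}|_\q$; but each of these vectors is supported on the first $l$ coordinates, while a direct computation of $\partial e_{l+1}/\partial x_j$ at $\q$ (compare the last row of~\eqref{eqn:l_n-1}) gives the nonzero value $q_1\cdots q_l$ in every coordinate $j>l$. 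This contradiction forces $\q\in\H_{k-1}(B_n)$, and then, distinguishing whether $\q\in M$, the inclusion $\H_{k-1}(B_n)\subseteq\H_k(D_n)$ follows from~\eqref{eqn:M} and finishes this case.

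I expect the one genuine obstacle to be this last step. The convenient assertion that $e_{l+1}\equiv0$ on $X$ forces $\nabla e_{l+1}|_\q$ into the span of the constraint gradients is valid only once smoothness of $X$ at $\q$ is available, so the case analysis must be arranged so that its only alternative — that $\Jac_\q(\PowSum_2,\dots,\PowSum_{2k})$ drops rank at $\q$ — is exactly the desired conclusion $\q\in\H_{k-1}(B_n)$. Everything else is a mechanical transcription of the $D_n$ argument, the recurring bookkeeping being the inclusions $\H_{k-1}(B_n)\subseteq\H_k(D_n)$ and $\H_k(B_n)\setminus M\subseteq\H_k(D_n)$, both immediate from~\eqref{eqn:M}.
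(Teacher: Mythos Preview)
Your proof is correct and follows essentially the same route as the paper: reduce to a principal variety $X_k(\q)$, pick a representative $\q$ with the maximal number $l$ of nonzero coordinates, and analyze the rank of the Jacobian of $\PowSum_2,\dots,\PowSum_{2k}$ together with $e_{l+1}$ (resp.\ $e_n$) using the matrices~\eqref{eqn:l_n-1} and~\eqref{eqn:l_n}. The only cosmetic difference is the case boundary: the paper handles $l\le k$ directly (such a $\q$ has at least $n-k\ge 2$ zero coordinates, so lies in $\H_k(B_n)\setminus M\subseteq\H_k(D_n)$), whereas you fold $l=k$ into the Jacobian case and obtain the slightly sharper conclusion $\q\in\H_{k-1}(B_n)$ there; your explicit discussion of the smoothness dichotomy is exactly the justification the paper leaves implicit when asserting that the gradients are linearly dependent.
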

\begin{proof}
    For $X = X_k(\q)$, we
    can assume that $\q =(q_1,\dots,q_l,0,\dots,0)$ with $q_i\neq 0$ for
    $1\le i\le l$ and $l$ maximal. If $l \le k$, then $ \q \in \H_k(D_n)$
    and we are done. So assume $k < l \le n$. We distinguish two cases:
    If $l \le n-1$, let $f = e_{l+1}$ and $\r \in X_k(\q)$ arbitrary.
    If $l = n$, let $f = e_{n}$ and $\r \in X_k(\q)$ a maximizer of $|f|$. The
    corresponding Jacobians~\eqref{eqn:l_n-1} and~\eqref{eqn:l_n} for
    $\PowSum_2,\dots,\PowSum_{2k},f$ at $\r$ yield the claim.
\end{proof}

\section{Real orbit spaces and reflection arrangements}\label{sec:codim1}

The reflection arrangement $\H$ decomposes $V$ into relatively open polyhedral
cones.  The closure $\sigma$ of a full-dimensional cone in this decomposition
serves as a \Defn{fundamental domain}: For every $\p \in V$ the orbit $G\p$
meets $\sigma$ in a unique point; see~\cite[Thm.~1.12]{hum}. On the other
hand, the basic invariants define an \Defn{orbit map} $\pi : V \rightarrow
\R^n$ given by $\pi(\x)=(\pi_1(\x),\ldots,\pi_n(\x))$. The basic invariants
separate orbits, that is, $\pi(\p) = \pi(\q)$ if and only if $\q \in G\p$ for
all $\p,\q \in V$.  The image $\Ospace:=\pi(V)$ is homeomorphic to $V / G$
and, by abuse of terminology, we call $\Ospace$ the \Defn{real orbit space}.
Since $\pi$ is an algebraic map, $\Ospace$ is semialgebraic (with an explicit
description given in~\cite{procesi}). Restricted to $\sigma$ the map
$\pi|_\sigma:\sigma\to \Ospace$ is a homeomorphism, by
\cite[Prop.~0.4]{procesi}.  Moreover, 
\begin{equation}\label{eqn:S_boundary}
    \pi^{-1}(\partial \Ospace) \ = \ \bigcup_{\p \in \partial \sigma} G\p \ =
    \ \H_{n-1}(G),
\end{equation}
where $\partial \Ospace$ denotes the boundary of $\Ospace$ and where the
second equality follows from~\cite[Thm.~1.12]{hum}.  Observe that 
neither the orbit space $\Ospace$ nor the fundamental domain $\sigma$ are
uniquely determined by $G$.

In terms of the orbit map, Conjecture~\ref{conj:main} can be put in a more
general context. For $J \subseteq [n] := \{1,\dots,n\}$, let us write
$\pi_J(\x) = (\pi_i(\x) : i \in J)$. For given $J$, we can ask for the smallest
$0 \le t \le n$ such that $\pi_J(V) \ = \ \pi_J(\H_t)$.

\begin{prop}\label{prop:conj-equiv}
    Let $G$ be an irreducible and essential reflection group. Then
    Conjecture~\ref{conj:main} is true for $G$ if and only if for $J =
    \{1,\dots,k\}$
    \[
        \pi_J(V) \ = \ \pi_J(\H_k).
    \]
\end{prop}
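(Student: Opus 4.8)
The plan is to reduce both directions to the observation, already emphasized after Proposition~\ref{prop:nonneg}, that every $k$-sparse $G$-invariant variety contains a principal $k$-sparse variety $X_k(\q)$, and that such principal varieties are exactly the fibers $\pi_J^{-1}(\pi_J(\q))$ for $J = \{1,\dots,k\}$. So the two statements are really two ways of saying that each nonempty fiber of $\pi_J$ meets $\H_k$.

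For the direction ``Conjecture~\ref{conj:main} $\Rightarrow$ $\pi_J(V) = \pi_J(\H_k)$'': the inclusion $\pi_J(\H_k) \subseteq \pi_J(V)$ is trivial since $\H_k \subseteq V$. For the reverse inclusion, take any $\s \in \pi_J(V)$, say $\s = \pi_J(\q)$ for some $\q \in V$. Then the principal $k$-sparse variety $X_k(\q)$ from~\eqref{eqn:X_k} is a nonempty $G$-invariant $k$-sparse real variety, so by the conjecture it meets $\H_k(G)$ in some point $\p$. Since $\pi_i(\p) = \pi_i(\q)$ for $i \in J$, we get $\s = \pi_J(\q) = \pi_J(\p) \in \pi_J(\H_k)$, as desired.

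For the converse: suppose $\pi_J(V) = \pi_J(\H_k)$ for $J = \{1,\dots,k\}$, and let $X$ be a nonempty $k$-sparse $G$-invariant variety, with defining equations in $\R[\pi_1,\dots,\pi_k]$ for some choice of basic invariants. First I would invoke Lemma~\ref{lem:ind}: since $G$ is irreducible and essential, is $d_{k+1} > d_k$ guaranteed? This is the point that needs care --- irreducible reflection groups can have repeated degrees (e.g.\ $D_n$ in even rank at the middle index), so $\R[\pi_1,\dots,\pi_k]$ may genuinely depend on the choice of basic invariants. However, $\pi_J(V) = \pi_J(V')$ regardless, because the two maps differ by a polynomial change of coordinates on the target that is invertible on the relevant locus, as in the proof of Lemma~\ref{lem:ind}; so the hypothesis is choice-independent, and likewise $\pi_J(\H_k)$ is. Granting this, pick $\q \in X$. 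The defining polynomials of $X$ are constant on the fiber $\pi_J^{-1}(\pi_J(\q)) = X_k(\q)$ (they are polynomials in $\pi_1,\dots,\pi_k$), and they vanish at $\q$, hence vanish on all of $X_k(\q)$; thus $X_k(\q) \subseteq X$. By hypothesis $\pi_J(\q) \in \pi_J(V) = \pi_J(\H_k)$, so there is $\p \in \H_k$ with $\pi_J(\p) = \pi_J(\q)$, i.e.\ $\p \in X_k(\q) \subseteq X$. Therefore $X \cap \H_k(G) \neq \emptyset$, which is Conjecture~\ref{conj:main} for $G$.

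The main obstacle is the bookkeeping around non-distinct degrees and the choice-dependence of $\R[\pi_1,\dots,\pi_k]$: one must argue that the geometric objects $\pi_J(V)$ and $\pi_J(\H_k)$ only depend on $G$ and $k$, not on the chosen basic invariants, so that the statement of the proposition is well-posed and the equivalence genuinely holds for all irreducible essential $G$. Once that is settled --- essentially a repackaging of the change-of-variables argument in Lemma~\ref{lem:ind} applied to the subring generated by $\pi_1,\dots,\pi_k$ together with the fact that two such subrings have the same real points of their fibers --- the rest is the short fiber-containment argument above.
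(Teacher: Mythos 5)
Your argument is correct and is essentially the paper's own proof: both directions come down to the identity $X_k(\q) = \pi_J^{-1}(\pi_J(\q))$ for $J=\{1,\dots,k\}$, combined with the observation (already made after Proposition~\ref{prop:nonneg}) that every $k$-sparse variety contains the principal variety $X_k(\q)$ for each of its points, so the conjecture is exactly the statement that every nonempty fiber of $\pi_J$ meets $\H_k$. The well-posedness issue you flag for repeated degrees (e.g.\ $D_n$ with $n$ even and $k=n/2$, where $\R[\pi_1,\dots,\pi_k]$ and hence the fibers of $\pi_J$ genuinely depend on the choice of basic invariants) is a real subtlety that the paper's one-line proof does not address either; your proposed fix via a target change of coordinates does not quite work there, since $\pi'_J$ need not be a function of $\pi_J$ alone, and the honest reading is that the displayed equality must be required for every admissible choice of basic invariants.
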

\begin{proof}
    For $\q \in V$, we have $X_k(\q) = \pi_J^{-1}(\pi_J(\q))$. Hence, $X_k(\q)
    \cap \H_k \neq \emptyset$ for $\q \in V$ if and only if there is some $\p
    \in \H_k$ such that $\pi_J(\q) = \pi_J(\p)$.
\end{proof}

A generalization of Theorem~\ref{thm:Sn-degree} to \emph{$J$-sparse} symmetric
polynomials $f \in \R[\pi_i : i \in J]$ was considered in~\cite{Riener14}.
The correspondence given in Proposition~\ref{prop:conj-equiv} also shows that
the dimensions of strata in Conjecture~\ref{conj:main} are best possible.

\begin{prop}\label{prop:lower_bound}
    Let $J \subseteq [n]$ and $0 \le t \le n$ such that $\pi_J(V) \ = \
    \pi_J(\H_t)$. Then $t \ge |J|$.
\end{prop}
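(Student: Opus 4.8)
The plan is a short dimension count comparing $\pi_J(V)$ with $\pi_J(\H_t)$, together with the fact that the coordinates of $\pi_J$ are algebraically independent.

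First I would bound $\dim \pi_J(\H_t)$ from above. By definition $\H_t$ is a finite union of linear subspaces of $V$, each of dimension at most $t$, so $\dim \H_t \le t$ as a semialgebraic subset of $V$. Since a polynomial map does not increase the dimension of a semialgebraic set, the hypothesis $\pi_J(V) = \pi_J(\H_t)$ gives
\[
    \dim \pi_J(V) \ = \ \dim \pi_J(\H_t) \ \le \ \dim \H_t \ \le \ t .
\]

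Next I would show $\dim \pi_J(V) = |J|$. The family $\{\pi_i : i \in J\}$ is a subfamily of a system of basic invariants and hence algebraically independent over $\R$. Therefore the generic rank of the $|J| \times n$ Jacobian $\Jac_\p(\pi_J)$ equals $\mathrm{trdeg}_\R\, \R(\pi_i : i \in J) = |J|$, so at a generic point $\p \in V$ the map $\pi_J$ is a submersion. Consequently $\pi_J(V)$ contains a nonempty open subset of $\R^{|J|}$ and thus has dimension $|J|$. Chaining this with the previous display yields $|J| \le t$, as claimed.

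The argument has no serious obstacle; the one point to state carefully is the standard equivalence used above between algebraic independence of the $\pi_i$, $i \in J$, and generic submersivity of $\pi_J$ (equivalently, the generic rank of $\Jac(\pi_J)$ being $|J|$). Should one wish to avoid Jacobians altogether, the same conclusion follows from the fact that the orbit map $\pi \colon V \to \Ospace$ is finite, hence dimension-preserving, giving $\dim \Ospace = n$, while the coordinate projection $\Ospace \to \R^{|J|}$ onto the entries indexed by $J$ is dominant precisely because the $\pi_i$, $i \in J$, satisfy no algebraic relation.
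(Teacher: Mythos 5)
Your proposal is correct and follows essentially the same dimension count as the paper: the paper likewise observes that $\pi_J(V)$ is full-dimensional of dimension $|J|$ (being the coordinate projection of the $n$-dimensional orbit space $\Ospace$, which is your parenthetical alternative) and that a polynomial image of $\H_t$ cannot exceed dimension $t$. Your Jacobian-rank justification of $\dim \pi_J(V)=|J|$ and your explicit appeal to the fact that semialgebraic dimension does not increase under polynomial maps merely spell out what the paper leaves implicit.
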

\begin{proof}
    The set $\pi_J(V)$ is the projection of the real orbit space $\Ospace$
    onto the coordinates indexed by $J$ and hence is of full dimension $|J|$.
    By invariance of dimension, this implies that $t = \dim \H_t \ge |J|$.
\end{proof}

For the next result recall that, by definition, $G \subset O(V)$ and
hence $\|\x\|^2 = \langle \x, \x \rangle$ is an invariant of $G$.

\begin{lem}\label{lem:S_linefree}
    Let $G$ be a finite reflection group and $\pi_1,\dots,\pi_n$ a choice of
    basic invariants such that $\pi_i(\x) = \|\x\|^2$ for some $i$. Then the
    orbit space $\Ospace = \pi(V)$ is line-free, that is, if $L \subseteq V$
    is an affine subspace such that $L \subseteq \Ospace$, then $L$ is a
    point.
\end{lem}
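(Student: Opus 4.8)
The plan is to exploit the special coordinate of the orbit map given by the squared norm. After relabeling we may assume $\pi_n(\x) = \|\x\|^2$, as nothing depends on which index carries it. Since $\|\x\|^2 \ge 0$ with equality only at $\x = \0$, the orbit space $\Ospace = \pi(V)$ is contained in the closed half-space $H^+ := \{\y \in \R^n : y_n \ge 0\}$ and meets the bounding hyperplane $\{y_n = 0\}$ only in the single point $\pi(\0)$. Moreover, for every $c > 0$ the fiber $\pi^{-1}(\{\y : y_n = c\})$ equals the sphere $\{\x \in V : \|\x\|^2 = c\}$, which is compact. Finally, recall that an affine subspace of positive dimension contains an affine line, so it suffices to show that $\Ospace$ contains no line.

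Suppose for contradiction that $\ell \subseteq \Ospace$ is an affine line. Since $\ell \subseteq \Ospace \subseteq H^+$ and a line contained in a half-space must be parallel to its bounding hyperplane, the coordinate $y_n$ is constant on $\ell$, say $y_n \equiv c \ge 0$. If $c = 0$, then $\ell \subseteq \Ospace \cap \{y_n = 0\} = \{\pi(\0)\}$, which is absurd since $\ell$ is a line. If $c > 0$, then $\pi^{-1}(\ell) \subseteq \pi^{-1}(\{y_n = c\})$ lies in a compact sphere; as $\pi$ is continuous and $\ell \subseteq \Ospace = \pi(V)$ gives $\pi\bigl(\pi^{-1}(\ell)\bigr) = \ell$, the line $\ell$ is the image of a bounded set under a continuous map, hence bounded, again a contradiction. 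Therefore $\Ospace$ contains no line, and every affine subspace $L \subseteq \Ospace$ is a point.

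The argument is short, so there is no serious obstacle; the only points requiring a little care are the reduction from an arbitrary affine subspace contained in $\Ospace$ to a line, and the elementary observation that a line inside a half-space has constant ``height'' along the normal direction — together with the fact that the $\|\x\|^2$-level sets are compact, which is exactly what rules out an unbounded line sitting inside $\Ospace$.
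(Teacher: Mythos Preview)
Your proof is correct and follows essentially the same strategy as the paper's: use that the coordinate $y_i = \|\x\|^2$ is nonnegative on $\Ospace$ to force any affine subspace $L \subseteq \Ospace$ to lie in a level set $\{y_i = c\}$, and then use compactness of the sphere $\{\|\x\|^2 = c\}$ to conclude that $L$ is bounded, hence a point. The only cosmetic difference is that the paper invokes the homeomorphism $\pi|_\sigma \colon \sigma \to \Ospace$ from a fundamental domain to identify $\Ospace \cap \{y_i = c\}$ with a compact piece of $\sigma$, whereas you work directly with the continuous map $\pi$ and the full sphere $\pi^{-1}(\{y_n = c\})$; both routes arrive at the same compactness argument.
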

\begin{proof}
    Since $\pi_i(\x) = \|\x\|^2 \ge 0$ for all $\x \in V$, the linear function
    $\ell(\y) = y_i$ is nonnegative on $\Ospace \subset \R^n$.  Hence, if $L
    \subseteq \Ospace$ is an affine subspace, then $\ell$ is constant on $L$.
    Let $\sigma \subseteq V$ be a fundamental domain for $G$.  Then $L =
    \Ospace \cap L$ is homeomorphic to $\hat{L} := \{ \p \in \sigma : \|\p\|^2
    = c \}$ for some $c \ge 0$. This implies that $L$ is compact which proves
    the claim.
\end{proof}

The following result is a slightly stronger but more technical extension of
Theorem~\ref{thm:all-but-one}.

\begin{thm}\label{thm:codim1}
    Let $G$ be an essential reflection group with a choice of basic invariants
    $\pi_1,\dots,\pi_n$. Let $f \in \R[V]^G$ be an invariant polynomial such
    that $f$ is at most linear in $\pi_k$
    for some $k$. Then $\VarR(f)\neq\emptyset$ if and only if
    $\VarR(f) \cap \H_{n-1} \neq \emptyset$.
\end{thm}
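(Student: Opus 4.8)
Write $f=F(\pi_1,\dots,\pi_n)$ with $F$ affine in the $k$-th variable, say $F=A(\pi_J)+\pi_k\,B(\pi_J)$ where $J:=\{1,\dots,n\}\setminus\{k\}$. The ``only if'' direction is trivial since $\H\subseteq V$. For the converse, by Lemma~\ref{lem:ind} I would first rescale and rechoose the basic invariants so that $\pi_{i_0}(\x)=\|\x\|^2$ for some $i_0$, doing so with $i_0\ne k$ whenever $d_k>2$; I will treat the case $d_k=2$ (for irreducible $G$ this forces $\pi_k$ to be a scalar multiple of $\|\x\|^2$, so $i_0=k$) separately. If $f(\0)=A(\0)=0$ then $\0\in\VarR(f)\cap\H_0(G)$ and we are done, so assume $A(\0)\ne0$. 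The plan is to split on whether the $G$-invariant variety $Z:=\VarR\bigl(A(\pi_J),\,B(\pi_J)\bigr)$ is empty; note $Z\subseteq\VarR(f)$ because $f$ vanishes wherever both $A(\pi_J)$ and $B(\pi_J)$ do.

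The first ingredient is the ``$B\equiv0$'' case, proved in the orbit space: if $g\in\R[\pi_i:i\in J]$ and $\VarR(g)\ne\emptyset$, then $\VarR(g)\cap\H\ne\emptyset$. Indeed, write $g=\bar g\circ\pi_J$, pick $\p_0\in\VarR(g)$, set $\bar\y_0:=\pi_J(\p_0)$, and consider $I:=\{t:\text{the point of }\R^n\text{ with $k$-th coordinate $t$ and remaining coordinates }\bar\y_0\text{ lies in }\Ospace\}$, a nonempty closed subset of $\R$. Since one basic invariant is $\|\x\|^2$, Lemma~\ref{lem:S_linefree} gives that $\Ospace$ is line-free, so $I\ne\R$; hence $I$ has a boundary point $t_\ast$, the corresponding point of $\Ospace$ lies in $\partial\Ospace$, and $\bar g$ vanishes there because it does not depend on the $k$-th coordinate. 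By \eqref{eqn:S_boundary} its $\pi$-preimage lies in $\VarR(g)\cap\H_{n-1}(G)$. Applying this with $g=A(\pi_J)^2+B(\pi_J)^2$ settles the case $Z\ne\emptyset$: then $Z\cap\H\ne\emptyset$ and $Z\subseteq\VarR(f)$.

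Assume now $Z=\emptyset$. Then $B(\pi_J)$ never vanishes on $\VarR(f)$ (otherwise the point would lie in $Z$). Minimize $\|\x\|^2$ over $\VarR(f)$; by coercivity of $\|\x\|^2$ and closedness of $\VarR(f)$ the minimum is attained at some $\p_\ast$, and $\p_\ast\ne\0$ since $A(\0)\ne0$. If $\p_\ast\in\H$ we are done; otherwise $\p_\ast$ is a regular point, so by Steinberg's theorem~\cite{stein} the gradients $\nabla\pi_1(\p_\ast),\dots,\nabla\pi_n(\p_\ast)$ are linearly independent. Expanding $\nabla f(\p_\ast)$ in this basis, the coefficient of $\nabla\pi_k(\p_\ast)$ equals $B(\pi_J(\p_\ast))$. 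If $\nabla f(\p_\ast)=0$ this coefficient vanishes. If not, the Lagrange condition $2\p_\ast=\lambda\nabla f(\p_\ast)$ with $\lambda\ne0$ applies, and since $2\p_\ast=\nabla\|\x\|^2(\p_\ast)$ is a linear combination of the gradients of the degree-$2$ basic invariants — hence, when $d_k>2$, has no $\nabla\pi_k(\p_\ast)$-component — comparing coefficients of $\nabla\pi_k(\p_\ast)$ again forces $B(\pi_J(\p_\ast))=0$. Either way $\p_\ast\in Z=\emptyset$, a contradiction; so $\p_\ast\in\H$. This proves the theorem whenever $d_k>2$, in particular (taking $k=n$, as $d_n>2$ for $G$ irreducible of rank $\ge2$) for every invariant $f$ with $\deg f<2d_n$.

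The remaining case $d_k=2$ with $Z=\emptyset$ — $f$ at most linear only in (a multiple of) $\|\x\|^2$ — is the main obstacle, because the Lagrange comparison above no longer eliminates the $\nabla\pi_k(\p_\ast)$-component. Here I would exploit that, for the minimizer $\p_\ast$ with $c:=\|\p_\ast\|^2>0$, the set $\VarR(f)\cap\{\|\x\|^2=c\}$ is a nonempty compact $G$-invariant variety equal to $\VarR(\tilde f)\cap\{\|\x\|^2=c\}$ for the degree-$0$-in-$\pi_k$ invariant $\tilde f:=A(\pi_J)+c\,B(\pi_J)$; on the compact orbit-space slice $\Ospace\cap\{y_k=c\}$, whose relative boundary is $\pi\bigl(\H_{n-1}(G)\cap\{\|\x\|^2=c\}\bigr)$, one would run a compact-slice analogue of the line-free argument of the second paragraph together with a continuity argument in the radius $c$ (equivalently, a direct analysis of $\VarR(f)$ along the one-dimensional flats of $\H$). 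I expect this residual case to be the technical heart of the proof.
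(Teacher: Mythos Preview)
Your argument for $d_k>2$ is correct, but it is considerably more elaborate than the paper's. You split on whether $Z=\VarR(A(\pi_J),B(\pi_J))$ is empty, handle $Z\ne\emptyset$ by your ``first ingredient'' (moving in direction $e_k$ inside $\Ospace$ until you hit $\partial\Ospace$), and handle $Z=\emptyset$ by minimizing $\|\x\|^2$ over $\VarR(f)$ and combining Lagrange multipliers with Steinberg's theorem to force $B(\pi_J(\p_\ast))=0$. The paper bypasses both the dichotomy and Steinberg. After normalizing $f(\0)<0$, it notes that by path-connectedness of $\H$ it suffices to exhibit \emph{one} point of $\H$ where $f\ge 0$, and then shows that the maximum of $f$ over the sphere $K$ through any zero of $f$ is attained on $\H$. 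The mechanism is exactly your first-ingredient line trick, but executed inside the \emph{compact} slice $\overline K=\pi(K)=\Ospace\cap\{y_1=c\}$ rather than in all of $\Ospace$: if a maximizer $\op_0$ lay in the interior, the segment $L\cap\overline K$ in direction $e_k$ would be a compact interval with endpoints on $\partial\overline K\subseteq\partial\Ospace$, and since $F$ is affine on $L$ one endpoint has $F\ge F(\op_0)$. Thus one orbit-space argument on the sphere replaces your two cases; no appeal to regularity of the Jacobian is needed.

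Regarding the residual case $d_k=2$ that you call the ``technical heart'': the paper does not isolate it. With $\pi_1=\|\x\|^2$, the slice argument above works verbatim whenever the direction $e_k$ lies in the hyperplane $\{y_1=c\}$, i.e., whenever $k\ne 1$; this covers $d_k>2$ automatically. When $G$ is irreducible and $\pi_k$ is the unique quadratic invariant (so $k=1$ after relabeling), the direction $e_k$ is transverse to the slice and neither the paper's line argument nor your sketched ``compact-slice analogue plus continuity in the radius'' goes through as stated---moving in direction $e_k$ simply leaves $\overline K$. So your instinct that this boundary case needs care is right, but it is a corner case the paper passes over rather than the place where the proof carries its weight. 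The main simplification you are missing is to maximize $f$ over a sphere instead of minimizing $\|\x\|^2$ over $\VarR(f)$: that single change collapses your entire Case~2 into the same two-line orbit-space argument as Case~1.
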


If $f_1,\dots,f_m$ are invariant polynomials that do not depend on $\pi_j$ for
some fixed $j$, then we can apply Theorem~\ref{thm:codim1} to $f = f_1^2 +
\cdots + f_m^2$, which then directly implies Theorem~\ref{thm:all-but-one}.

\newcommand\op{\overline{\p}}%
\begin{proof}
    Without loss of generality, we can assume that $f(0) < 0$.  Since
    $\H_{n-1}$ is path connected, it suffices to show that there is a point
    $\p_+ \in \H_{n-1}(G)$ with $f(\p_+) \ge 0$.  
    
    We can assume that $\pi_1 = \|\x\|^2$. Indeed, since $G$ is
    essential, all basic invariants have degree at least $2$ and $\|\x\|^2$ is
    a linear combination of the degree $2$ basic invariants.  Let $\p \in
    \VarR(f)$ and define $K = \{ \q : \pi_1(\q) = \pi_1(\p)\}$, the sphere
    centered at the origin that contains $\p$. The function $f$ attains its
    maximum over $K$ in a closed set $M \subseteq K$. We claim that
    $M \cap \H_{n-1} \neq \emptyset$. Let $\p_0$ be a point in $M$.

    We may pass to the real orbit space $\Ospace = \pi(V)$ associated to $G$
    and $\pi_1,\dots,\pi_n$ and consider the compact set $\overline{K} :=
    \pi(K) = \{ \y \in \Ospace : y_1 = \pi_1(\p)\}$.  We can write $f =
    F(\pi_1,\dots,\pi_n)$ for some $F \in \R[y_1,\dots,y_n]$.  In this
    setting, our assumption states that $F$ is at most linear in $y_k$.  If
    $\p_0 \in V \setminus \H_{n-1}$, then, by~\eqref{eqn:S_boundary}, $\op_0 :=
    \pi(\p_0)$ is in the interior of $\Ospace$ and hence in the relative
    interior of $\overline{K}$. Let $L = \{ \op_0 + t e_k : t \in \R\}$ be the
    affine line through $\op_0$ in direction $e_k$. Restricted to $L$, the
    polynomial $F$ has degree at most $1$. By Lemma~\ref{lem:S_linefree} and
    our choice of $\overline{K}$, the line $L$ meets $\partial \overline{K}$
    in two points $\op_-,\op_+$ and  $ F(\op_-) \le F(\op_0) \le F(\op_+)$.
    This implies that $\pi^{-1}(\op_+) \subseteq M$ and, since $\partial
    \overline{K} \subseteq \partial \Ospace$, equation~\eqref{eqn:S_boundary}
    shows that $\pi^{-1}(\op_+) \subseteq \H_{n-1}$.
\end{proof}

The assumption in Theorem~\ref{thm:codim1} that $G$ is essential is essential.
For example, let $G = B_n$ act on $V = \R^n \times \R$ by fixing the last
coordinate. A set of basic invariants is given by $\pi_1(\x,x_{n+1}) =
x_{n+1}$ and $\pi_i(\x,x_{n+1}) = \PowSum_{2i-2}(\x)$ for $i = 2,\dots,n+1$.
Pick $\p \in \R^n$ with all coordinates positive and distinct. The variety
\begin{equation}\label{eqn:Bn-counter}
    X \ = \ \{ (\x,x_{n+1}) \in V : \PowSum_{2i}(\x) = \PowSum_{2i}(\p) \text{ 
    for } i = 1,\dots,n \}
\end{equation}
is defined over $\R[\pi_2,\dots,\pi_{n+1}]$, but is a collection of affine
lines that does not meet the reflection arrangement.

We give two further applications of Theorem~\ref{thm:codim1}.

\begin{cor}\label{cor:codim1apps}
    Let $G$ be an essential reflection group and let $J \subset [n]$ with $|J|
    = n-1$. For polynomials $f, f_1,\dots,f_m \in \R[\pi_i : i \in J]$, the
    following hold:
    \begin{enumerate}[\rm (i)]
        \item $\SemiAlg = \{ \p : f_1(\p) \ge 0,\dots, f_m(\p) \ge 0\}$ is
            nonempty if and only if $\SemiAlg \cap \H_{n-1}(G) \neq \emptyset$.
        \item $f(\q) \ge 0$ for all $\q \in \SemiAlg$ if and only if $f(\q)
            \ge 0$ for all $\q \in \SemiAlg \cap \H_{n-1}(G)$.
    \end{enumerate}
\end{cor}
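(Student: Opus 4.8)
The plan is to reduce both parts to Theorem~\ref{thm:all-but-one} (equivalently, to Theorem~\ref{thm:codim1}) by passing to a principal sparse variety, exactly as in the proof of Proposition~\ref{prop:nonneg}. Write $J = [n] \setminus \{j\}$ and, for $\q \in V$, consider
\[
    X_J(\q) \ := \ \{ \p \in V : \pi_i(\p) = \pi_i(\q) \text{ for all } i \in J \} \ = \ \VarR\bigl( \pi_i - \pi_i(\q) : i \in J \bigr).
\]
This is a nonempty (it contains $\q$) $G$-invariant real variety whose $n-1$ defining polynomials all lie in $\R[\pi_i : i \neq j]$, so Theorem~\ref{thm:all-but-one} applies and gives $X_J(\q) \cap \H_{n-1}(G) \neq \emptyset$. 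Recall here that for an essential group of rank $n$ the stratum $\H_{n-1}(G)$ coincides as a set with the reflection arrangement $\H$, its flats of dimension $n-1$ being precisely the reflection hyperplanes.

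The second ingredient I would record is that any $h \in \R[\pi_i : i \in J]$ is \emph{constant} on $X_J(\q)$: writing $h = H(\pi_i : i \in J)$ with $H \in \R[y_i : i \in J]$, all arguments $\pi_i$ with $i \in J$ are constant on $X_J(\q)$, hence $h \equiv h(\q)$ there. In particular, if $\q \in \SemiAlg$ then $f_\ell \equiv f_\ell(\q) \ge 0$ on $X_J(\q)$ for every $\ell$, so that $X_J(\q) \subseteq \SemiAlg$; and likewise $f \equiv f(\q)$ on $X_J(\q)$.

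With these two facts in hand both parts are immediate. For (i): the implication from right to left is trivial since $\SemiAlg \cap \H_{n-1}(G) \subseteq \SemiAlg$; conversely, if $\SemiAlg \neq \emptyset$ I pick $\q \in \SemiAlg$, note $X_J(\q) \subseteq \SemiAlg$ and $X_J(\q) \cap \H_{n-1}(G) \neq \emptyset$, whence $\SemiAlg \cap \H_{n-1}(G) \neq \emptyset$. For (ii): again one direction is trivial, and for the other I argue by contradiction: if $f(\q) < 0$ for some $\q \in \SemiAlg$, then $X_J(\q) \subseteq \SemiAlg$, $f \equiv f(\q) < 0$ on $X_J(\q)$, and $X_J(\q)$ meets $\H_{n-1}(G)$; any point $\p$ in the intersection lies in $\SemiAlg \cap \H_{n-1}(G)$ with $f(\p) = f(\q) < 0$, contradicting the hypothesis. (The same scheme with $\SemiAlg$ empty of defining inequalities recovers the statement that nonnegativity can be tested on $\H_{n-1}(G)$.)

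I do not expect a genuine obstacle here: the argument is a direct application of the codimension-one results. The only points requiring care are to check that $X_J(\q)$ is indeed cut out by invariants omitting a single basic invariant (true by construction), that the defining inequalities of $\SemiAlg$ and the polynomial $f$ become constant on each fiber $X_J(\q)$ (so the fiber lies inside $\SemiAlg$ and $f$ does not change sign along it), and that the conclusion of Theorem~\ref{thm:all-but-one}/\ref{thm:codim1} is phrased in terms of $\H_{n-1}(G)$, which is exactly the set appearing in the statement.
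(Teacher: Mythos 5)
Your proof is correct and follows essentially the same route as the paper: reduce to the principal fiber $X_J(\q)$, note it lies in $\SemiAlg$, and apply the codimension-one result (Theorem~\ref{thm:codim1}, via Theorem~\ref{thm:all-but-one}). The only cosmetic difference is in (ii), where the paper intersects with $\{\p : f(\p)=f(\q)\}$ while you observe that $f$ is already constant on the fiber, which makes that intersection redundant.
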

\begin{proof}
    For $\q \in \SemiAlg$, it suffices to prove the claim for 
    \[
        X \ := \  \{ \p \in V : \pi_j(\p) = \pi_j(\q) \text{ for } j \in J\} \
        \subseteq \ S.
    \]
    Claim (i) now follows from Theorem~\ref{thm:codim1}. As for (ii), assume
    that $\q \in \SemiAlg \setminus \H_{n-1}$ and $f(\q) < 0$.  Then the same
    argument applied to $X \cap \{ \p : f(\p)  = f(\q)\}$ finishes the proof.
\end{proof}

If $f \in \R[V]^G$ has degree $\deg(f) < 2 \deg(\pi_n)$, then the algebraic
independence of the basic invariants implies that Theorem~\ref{thm:codim1} can
be applied to proof the following corollary. Under the assumption that $f$ is
homogeneous, the second part of the corollary recovers the main result of
Acevedo and Velasco~\cite{velasco}. 

\begin{cor}
    Let $f \in \R[V]^G$ with $\deg(f)<2d_n(G) = 2\deg(\pi_n)$. Then
    $\VarR(f)\neq\emptyset$ if and only if $\VarR(f) \cap \H_{n-1} \neq
    \emptyset$. In particular, $f \ge 0$ on $V$ if and only if $f \ge 0$ on
    $\H_{n-1}$.
\end{cor}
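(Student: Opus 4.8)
The plan is to deduce this corollary directly from Theorem~\ref{thm:codim1} by showing that the degree hypothesis forces $f$ to be at most linear in the top basic invariant $\pi_n$. First I would write $f = F(\pi_1,\dots,\pi_n)$ for some $F \in \R[y_1,\dots,y_n]$, which is possible by Chevalley's theorem since $f \in \R[V]^G$. Assign to $y_i$ the weight $d_i(G) = \deg \pi_i$, so that every monomial $y_1^{a_1}\cdots y_n^{a_n}$ appearing in $F$ is \emph{isobaric} of weighted degree $\deg(f) < 2 d_n(G)$; this uses the algebraic independence of the basic invariants, which guarantees that distinct monomials in the $\pi_i$ remain linearly independent and cannot cancel, so no term of $F$ can have weighted degree exceeding $\deg(f)$. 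Since $d_n(G) = \deg(\pi_n) \ge d_i(G) \ge 1$ for all $i$, the exponent $a_n$ of $y_n$ in any such monomial satisfies $a_n \, d_n(G) \le \deg(f) < 2 d_n(G)$, hence $a_n \le 1$. Thus $F$ — and therefore $f$ — is at most linear in $\pi_n$, and Theorem~\ref{thm:codim1} (applied with $k = n$) yields $\VarR(f) \neq \emptyset$ iff $\VarR(f) \cap \H \neq \emptyset$.

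For the ``in particular'' clause I would argue as follows. If $f \ge 0$ on $\H$ but $f(\p_0) < 0$ for some $\p_0 \in V$, then $\VarR(f)$ is nonempty (being the common zero set of the nonnegative function $\max(f,0)$... more simply: $f$ takes a negative value and, since $\H \neq \emptyset$ with $f \ge 0$ there, by continuity along a path in the connected set $V$ from $\p_0$ to a point of $\H$, $f$ has a zero, so $\VarR(f) \neq \emptyset$). By the first part $\VarR(f) \cap \H \neq \emptyset$, i.e.\ $f$ vanishes somewhere on $\H$; this is consistent and gives no contradiction yet, so instead I would observe directly: for any $\p \in V$, the invariant variety $Y_\p := \{\q : \pi_i(\q) = \pi_i(\p),\ i=1,\dots,n\} = G\p$ is a single orbit, which always meets a fundamental domain and hence need not meet $\H$ — so this route needs care. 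The cleaner argument: consider $g := f - \min_{\H} f$ if $\min_\H f < 0$, contradicting $f \ge 0$ on $\H$; so $\min_\H f \ge 0$, and if $f$ were negative somewhere on $V$ then $\VarR(f - c) \neq \emptyset$ for small $c > 0$ with $f - c < 0$ at that point, while $f - c$ is still at most linear in $\pi_n$, so $\VarR(f-c)$ meets $\H$, forcing $f \le c$ somewhere on $\H$ — iterating/taking $c \to 0$ shows $\inf_\H f \le 0 \le \inf_\H f$ is not yet enough. The simplest correct statement: apply the equivalence to $f - t$ for the value $t = \min_{V} f$; since $f$ is invariant and (if $\deg f$ even and leading behavior controlled) attains its minimum, $\VarR(f - t)$ is nonempty, hence meets $\H$, so $\min_V f = \min_\H f$, which gives both directions of ``$f \ge 0$ on $V$ iff $f \ge 0$ on $\H$''.

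I expect the main obstacle to be the ``in particular'' clause rather than the degree bookkeeping: one must be slightly careful that $f$ actually attains its minimum on $V$ (true when $\deg f$ is even and the minimum is finite, e.g.\ when $f \ge 0$ on $\H$ already bounds $f$ below on $V$ via the orbit-space picture of Lemma~\ref{lem:S_linefree}), and that applying Theorem~\ref{thm:codim1} to the shifted polynomial $f - c$ is legitimate — but $f - c$ differs from $f$ only by a constant, so it has exactly the same dependence on $\pi_n$ and the same invariance, so this is immediate. The degree computation itself is routine once one invokes algebraic independence to rule out cancellation among monomials in the $\pi_i$.
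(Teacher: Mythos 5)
Your first paragraph is exactly the paper's argument: the corollary is stated there without a separate proof precisely because the degree bookkeeping you carry out (isobaric weights $d_i(G)$ on $y_i$, algebraic independence ruling out cancellation, hence $a_n\,d_n(G)\le\deg(f)<2d_n(G)$ and so $a_n\le 1$) shows $f$ is at most linear in $\pi_n$, and Theorem~\ref{thm:codim1} applies. That part is correct and complete.

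The ``in particular'' clause is where you have a genuine gap. Your three attempts either use the wrong sign of the shift or end up resting on the claim that $f$ attains its minimum over $V$, which is false in general ($V$ is not compact; $f=-\pi_1=-\|\x\|^2$ attains no minimum), and your hedge ``when $f\ge 0$ on $\H$ already bounds $f$ below on $V$'' assumes essentially what is to be proved. The clean argument is the constant-shift idea you gestured at, but with a \emph{negative} shift: suppose $f\ge 0$ on $\H$ and $f(\q)<0$ for some $\q\in V$. Choose $c$ with $f(\q)<c<0$ and set $g:=f-c$. Then $g(\q)<0$ while $g\ge -c>0$ on $\H$; since $V$ is connected and $\H\neq\emptyset$, the intermediate value theorem gives $\VarR(g)\neq\emptyset$. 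The polynomial $g$ differs from $f$ by a constant, so it has the same degree and is still at most linear in $\pi_n$; the first part of the corollary then forces $\VarR(g)\cap\H\neq\emptyset$, i.e.\ $f=c<0$ somewhere on $\H$, a contradiction. (The converse direction is trivial as $\H\subseteq V$.) With this replacement your proof is complete; note also that Theorem~\ref{thm:codim1} requires $G$ essential, a hypothesis the corollary inherits from the surrounding context and which you should carry along.
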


The bound on the degree is tight: For a point $\p \in V \setminus
\H_{n-1}(G)$, the set of solutions to
\[
    f(\x) \ := \ \sum_{i=1}^n (\pi_{i}(\x) - \pi_{i}(\p))^2 \ = \ 0
\]
is exactly $G\p$, which does not meet $\H_{n-1}(G)$. The defining polynomial
$f(\x)$ is of degree exactly $2\deg(\pi_n)$. Theorem~\ref{thm:codim1} also allows us to prove Theorem~\ref{thm:main} for groups of low rank.

\begin{proof}[Proof of Theorem~\ref{thm:main} for $\rk(G) \le 3$]
    For $k = \rk(G)$, there is nothing to prove. For $k = 1$, we observe that
    $X_1(\p)$ is the sphere through $\p$, which meets the arrangement
    $\H_1(G)$ of lines through the origin. Thus, the only nontrivial case is
    $\rk(G) = 3$ and $k= \rk(G) -1 = 2$. This is covered by Theorem~\ref{thm:all-but-one}.
\end{proof}

\newcommand\Dmin{\delta_{\min}}
\newcommand\Dmax{\delta_{\max}}
Let $G$ be an essential reflection group of rank $\ge 4$. Since $G$ acts on
$V$ by orthogonal transformations, we have that $\pi_1(\x) = \|\x\|^2$ and
$X_k(\p)$ is a subvariety of a sphere centered at the origin.  Since the basic
invariants are homogeneous, we may assume that $\pi_1(\p) = 1$ and hence
$X_k(\p) \subseteq S^{n-1} = \{ \x \in V : \|\x\| = 1 \}$.  To prove
Theorem~\ref{thm:main} for $k=2$ we can proceed as follows. Let $\Dmin$ and
$\Dmax$ be the minimum and maximum of $\pi_2$ over $S^{n-1}$. Then it suffices
to find points $\p_{\min},\p_{\max} \in \H_2(G) \cap S^{n-1}$ with
$\pi_2(\p_{\min}) = \Dmin$ and $\pi_2(\p_{\max}) = \Dmax$. Indeed, since
$\H_2(G)$ is connected (for $\rk(G) \ge 3$), this shows that $\pi_J(V) =
\pi_J(\H_2(G))$ for $J=\{1,2\}$, which, by Proposition~\ref{prop:conj-equiv},
then proves the claim. For the group $F_4$, we can implement this strategy.

\begin{proof}[Proof of Theorem~\ref{thm:main} for $F_4$]
    Since $F_4$ is of rank $4$, we only need to consider the case $k=2$ and
    can use the strategy outlined above. Let $\Dmin$ and $\Dmax$ be the
    minimum and maximum of $\pi_2$ over $S^3$.  An explicit description of
    $\pi_2$ for $F_4$ is
    \[
        \pi_2(\x) \ = \ \sum_{1 \le i < j \le 4} (x_i + x_j)^6 + (x_i -
        x_j)^6;
    \]
    see, for example, Mehta~\cite{Mehta} or~\cite[Table~5]{IKM}. The points
    $\p = (1,0,0,0)$ and $\p' = (\frac{1}{\sqrt{2}}, \frac{1}{\sqrt{2}}, 0,
    0)$ are contained in $\H_1(F_4) \subseteq \H_2(F_4)$ and takes values
    $\pi_2(\p) = 1$ and $\pi_2(\p') = \frac 3 2$. We claim, that these values
    are exactly $\Dmin$ and $\Dmax$, respectively.

    Note that $\pi_2(\x) = g(x_1^2,x_2^2,x_3^2,x_4^2)$ for 
    \[
        g(\y) \ = \ 5 \PowSum_1(\y) \cdot \PowSum_2(\y) - 4 \PowSum_3(\y).
    \]
    Let $\Delta_3 = \{ \x \in \R^4 : x_1,\dots,x_4 \ge 0, x_1 + \cdots + x_4 =
    1 \}$ be the standard $3$-simplex. We have that $\rho(S^3) = \Delta_3$
    where $\rho(x_1,\dots,x_4) := (x_1^2,\dots,x_4^2)$. Hence,
    \[
        \Dmax \ = \ \max \{ g(\p) : \p \in \Delta_3 \} \quad \text{ and }
        \quad \Dmin \ = \ \min \{ g(\p) : \p \in \Delta_3 \}.
    \] 
    Now, $D_4$ is a subgroup of $F_4$ and $\pi_2 \in
    \R[\PowSum_2,\PowSum_4,\PowSum_6]$ and does not depend on $e_4(\x)$. By
    Theorem~\ref{thm:main} for $D_4$, the varieties $S^3 \cap \{ \pi_2(\x) =
    \Dmin\}$ and $S^3 \cap \{ \pi_2(\x) = \Dmax\}$ both meet $\H_3(D_4)$.
    Hence, it suffices to minimize or maximize $g(\x)$ over 
    \[
        \Delta_3 \cap \{ \x \in \R^4 : x_1 = x_2 \}.
    \]
    This leaves us with the (standard) task to maximize and minimize a
    bivariate polynomial $g'(s,t)$ of degree $3$ over a triangle. In the
    plane, the polynomial has $3$ critical points with values $1,
    \frac{11}{9}, \frac{11}{9}$. On the boundary, the extreme values are
    attained at the points given above.
\end{proof}

For the rank-$4$ reflection group $H_4$, the invariant $\pi_2(\x)$ is a
polynomial of degree $12$ in four variables; see, for
example,~\cite[Table~6]{IKM}. Since $(B_1)^4$ is a reflection subgroup of
$F_4$, $\pi_2$ is a polynomial in the squares $x_1^2,\dots,x_4^2$ and,
following the argument in the proof above, we are left with minimizing and
maximizing a degree-$6$ polynomial $g(\x)$ over the simplex $\Delta_3$.
However, finding the critical points is not easy and an extra
computational challenge is the fact that $g(\x)$ is a polynomial with
coefficients in $\Q(\sqrt{5})$.  \textsc{GloptiPoly}~\cite{Glo}
\emph{numerically} computes $\Dmin = -\frac{5}{16}$ and $\Dmax = 1$. These
values are attained at $\p_{\min} = \frac{1}{\sqrt{2}}(1,1,0,0)$ and
$\p_{\max} = (1,0,0,0)$, respectively, and both points lie in $\H_2((B_1)^4)
\subseteq \H_2(H_4)$. This is strong evidence for the validity of
Conjecture~\ref{conj:main} for $H_4$ but, of course, not a rigorous
proof.

\section{Strata of higher codimension}\label{sec:higher_codim}

\newcommand\Hyp{\mathcal{A}}    
\newcommand\Flats{\mathcal{L}}    
\newcommand\Strat{\mathcal{H}}    
\newcommand\OpStrat{\mathcal{H}^\circ}    

\newcommand\halfDeg{\mathfrak{s}}
\newcommand\Gdeg{\deg_G}

We have seen in the previous section that every nonempty $(n-1)$-sparse
variety meets the hyperplane arrangement $\H_{n-1}$. In this section want to
extend this result to $k$-sparse varieties for $k<n-1$. This case is
considerably more difficult but we can make good use of the techniques and
ideas developed in Section~\ref{sec:codim1}.

Let $G$ be an essential finite reflection group acting on $V\cong\R^n$.
Consider a $G$-invariant $k$-sparse variety $X$ with $k<n$. If $X$ is
nonempty, then Theorem~\ref{thm:codim1} yields that for some reflection
hyperplane $H \in \H$ the variety $X':=X\cap H$ is nonempty. An inductive
argument could now replace $G$ by some other reflection subgroup $G' \subseteq
G$ that fixes $H$. If $X'$ remains sparse with respect to $G'$ we can again
apply Theorem~\ref{thm:codim1} to obtain a point
$\p \in \H_{n-2}(G') \subseteq \H_{n-2}(G)$. However, the results obtained using
this strategy are far from optimal. We will briefly illustrate this for
$G=\SymGrp_n$: Let $X$ be a nonempty $k$-sparse $\SymGrp$-invariant variety
for $k<n$. The largest subgroup of $\SymGrp_n$ that fixes a given
reflection hyperplane $H \in \H$ is $G'\cong\SymGrp_{n-2}\times\SymGrp_2$. Hence
Theorem~\ref{thm:codim1} only applies for $X'=X\cap H$ and $G'$ if  
$k=d_k(G)<d_n(G')=n-2$, in other words if the original variety $X$ is
$(k-3)$-sparse. Inductively, this yields that every nonempty $k$-sparse
$\SymGrp_n$-invariant variety meets $\H_l$ where $l=\lfloor
\frac{n+k}{2}\rfloor$.  However, applying the above method to the exceptional
types gives nontrivial bounds.

\begin{prop}\label{prop:fixH}
    Let $6 \le n \le 8$. Then every nonempty $2$-sparse $E_n$-invariant
    variety intersects $\H_{n-2}(E_n)$.
\end{prop}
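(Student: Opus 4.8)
The plan is to run the ``restrict to a reflection hyperplane, then re-apply the codimension-one result'' strategy described above for exactly one step, made efficient by restricting not to an arbitrary reflection subgroup fixing $H$ but to the point stabilizer of a root line. Fix $n\in\{6,7,8\}$ and let $X=\VarR(f_1,\dots,f_m)$ be nonempty with $f_1,\dots,f_m\in\R[\pi_1,\pi_2]$ for some choice of basic invariants of $E_n$. Since $n\ge 3$ we have $f_i\in\R[\pi_i:i\neq 3]$, so Theorem~\ref{thm:all-but-one} gives $X\cap\H_{n-1}(E_n)\neq\emptyset$; as the $(n-1)$-dimensional flats of $\H(E_n)$ are exactly its reflection hyperplanes, $X$ meets some $H=\alpha^\perp$ in $\H(E_n)$. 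The problem is thus reduced to showing that the nonempty variety $X':=X\cap H\subseteq H\cong\R^{n-1}$ meets $\H_{n-2}(E_n)$.

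Next I would equip $H$ with a reflection group under which $X'$ is sparse. Let $P\le E_n$ be the stabilizer of a nonzero point on the line $H^\perp=\R\alpha$, equivalently the pointwise stabilizer of $H^\perp$; this is a parabolic subgroup, and by Steinberg's theorem it is generated by the reflections $s_\beta$ with $\beta\perp\alpha$, that is, by the reflections of the root subsystem $\Phi(E_n)\cap\alpha^\perp$. Since $E_n$ is simply laced, all of its reflections are conjugate, so $P$ is determined up to conjugacy, and the classification of maximal-rank root subsystems (Borel--de Siebenthal) identifies $\Phi(E_n)\cap\alpha^\perp$ as being of type $A_5$, $D_6$, $E_7$ for $n=6,7,8$ respectively. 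In particular $P$ has rank $n-1$ and acts as an irreducible essential reflection group on $H$, and each of its reflection hyperplanes has the form $H'\cap H$ for a reflection hyperplane $H'\neq H$ of $E_n$; hence a codimension-one flat of $\H(P)$ inside $H$ is a codimension-two flat of $\H(E_n)$, and $\H_{n-2}(P)\subseteq\H_{n-2}(E_n)$. It therefore suffices to prove $X'\cap\H_{n-2}(P)\neq\emptyset$.

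For this I would apply Theorem~\ref{thm:all-but-one} a second time, now to $P$. Writing $f_i=F_i(\pi_1,\pi_2)$ we get $f_i|_H=F_i(\pi_1|_H,\pi_2|_H)$, and $\pi_1|_H$, $\pi_2|_H$ are $P$-invariant polynomials of degrees $d_1(E_n)=2$ and $d_2(E_n)$, where $d_2(E_6)=5$, $d_2(E_7)=6$, $d_2(E_8)=8$. Expanding a homogeneous $P$-invariant of degree $d$ in a fixed set of basic invariants of $P$ and comparing degrees (as in Lemma~\ref{lem:ind}) shows that only basic invariants of $P$ of degree $\le d$ can appear; since the top degrees of $A_5$, $D_6$, $E_7$ are $6$, $10$, $18$, each exceeds the corresponding $d_2(E_n)$, so some basic invariant $\pi'_j$ of $P$ of degree $>d_2(E_n)$ occurs in none of the $f_i|_H$. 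Thus $f_1|_H,\dots,f_m|_H\in\R[\pi'_i:i\neq j]$, and as $P$ is essential of rank $n-1$, Theorem~\ref{thm:all-but-one} yields $X'\cap\H_{n-2}(P)\neq\emptyset$, which finishes the proof.

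The only input beyond the earlier sections is the structural claim in the second paragraph, which is therefore the main point to nail down: that the point stabilizer of a root line in $E_n$ is an irreducible, essential, rank-$(n-1)$ reflection group on $H$ of type $A_5$, $D_6$, or $E_7$, and that its reflection hyperplanes are exactly the traces on $H$ of the remaining reflection hyperplanes of $E_n$. All of this is classical (Steinberg's theorem together with the list of maximal-rank root subsystems of $E_6,E_7,E_8$), so once it is recorded the remaining degree bookkeeping is routine; irreducibility and essentiality of $P$ also ensure that $\pi_1|_H$ is a nonzero multiple of $\|\x\|^2|_H$, so it contributes precisely the first basic invariant of $P$ and the reduction goes through cleanly.
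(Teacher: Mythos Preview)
Your argument is correct and follows the same two–step scheme as the paper: apply Theorem~\ref{thm:all-but-one} once to land on a reflection hyperplane $H$, then apply it again inside $H$ to a reflection subgroup whose top degree exceeds $d_2(E_n)$. The difference is in the choice of that subgroup. The paper (illustrated for $E_8$) picks the concrete hyperplane $\{x_1=x_2\}$ and uses the $D_6$ acting on $x_3,\dots,x_8$; this $D_6$ is not essential on $H$, so one implicitly has to pass to the affine slice $\{x_1=x_2=p_1\}$ on which it \emph{is} essential before invoking Theorem~\ref{thm:all-but-one}. You instead take the full parabolic point–stabilizer of the root line, of type $A_5$, $D_6$, $E_7$ for $n=6,7,8$, which acts essentially on $H$ itself, so the second application of Theorem~\ref{thm:all-but-one} is immediate. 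Your route is a bit more uniform across the three cases and avoids the extra slicing; the paper's choice has the virtue of being completely explicit in coordinates. One minor remark: the identification of $\Phi(E_n)\cap\alpha^\perp$ is really just the parabolic obtained by deleting the simple root adjacent to the affine node, so you do not need the full Borel--de Siebenthal list.
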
 
\begin{proof}
    We exemplify the argument for the case $n=8$. Let $X$ be a nonempty
    $2$-sparse $E_8$-invariant variety. By Theorem~\ref{thm:codim1} we find a
    point $\p \in X \cap \H_7(E_8)$. The orbit of $\p$ meets every
    hyperplane in $\H(E_8)$ (see \cite[Sect.~2.10]{hum}) and  hence we may assume
    that $\p$ lies on the hyperplane $H=\{\x\in\R^8:x_1=x_2\}$. Consider the
    subgroup $G'\cong D_6\subset E_8$ acting essentially on the coordinates
    $x_3,\dots,x_8$. Since $d_6(D_6)=10>8=d_2(E_8)$, we can apply
    Theorem~\ref{thm:codim1} to finish the proof.
\end{proof}

By restricting the class of invariant polynomials, we obtain better bounds
than those in Proposition~\ref{prop:fixH}. In the following, a point $\p$ is
called \Defn{$\boldsymbol G$-general} if it does not lie on any reflection
hyperplane of $G$, and hence $|G\p| = |G|$.

\begin{definition}
    For a positive integer $d$, let $\strNum(d)$ be the largest number $\ell$
    such that for every $p \in \Strat_{\ell+1}$ there is a reflection subgroup
    $G' \subseteq G$ such that $p$ is $G'$-general and $2d_n(G') > d$.
    Moreover, we define $\SecStrNum(k) := \strNum(2d_k(G))$.  That is,
    $\SecStrNum(k)$ is the largest $0 \le \ell \le d$ such that for every $p
    \in \Strat_{\ell+1}$, there is a reflection subgroup $G' \subseteq G$ such
    that $p$ is $G'$-general and $d_n(G')> d_k(G)$. 
\end{definition}

We call an invariant polynomial $f\in\R[V]^G$ \Defn{$\boldsymbol G$-finite} if
either $V_\R(f)=\emptyset$ or if there is a point $\p \in V_\R(f)$ such that
$f$ has finitely many extreme points restricted to the sphere $K = \{ \q \in V :
\|\q\| = \|\p\|\}$. 

\begin{thm}\label{thm:HalfDeg}\label{thm:DegPrinc}
    Let $f \in \R[V]^G$ be a $G$-finite polynomial and  $X=\VarR(f)$. 
    If $f\in\R[\pi_1,\dots,\pi_k]$, then
    \[
        X \ \neq \ \emptyset \quad \text{ if and only if } \quad
        X \cap \H_{\SecStrNum(k)} \ \neq \ \emptyset.
    \]
    If $d = \deg(f)$, then
    \[
        X \ \neq \ \emptyset \quad \text{ if and only if } \quad
       X \cap \H_{\strNum(d)} \ \neq \ \emptyset.
    \]
\end{thm}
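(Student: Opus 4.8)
The plan is to run the inductive ``peel off a hyperplane'' argument sketched before Proposition~\ref{prop:fixH}, but tuned so that the dimension we lose at each step is controlled by $\strNum$ (resp.\ $\SecStrNum$), and to make the base case precise using Theorem~\ref{thm:codim1}. Since every real variety $\VarR(f_1,\dots,f_m)$ equals $\VarR(f)$ for $f=\sum f_i^2$, and since the two statements are linked by $\SecStrNum(k)=\strNum(2d_k(G))$, it suffices to prove the second equivalence for a single $G$-finite polynomial $f$ with $d=\deg f$; the first then follows because $f\in\R[\pi_1,\dots,\pi_k]$ forces $\deg f$ to be a nonnegative combination of $d_1,\dots,d_k$ on each monomial, but more simply because any such $f$ is constant on the principal sparse varieties $X_k(\q)$, so we may instead argue with $X_k(\q)$ directly and invoke $d_n(G')>d_k(G)$ in place of $2d_n(G')>d$.

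First I would reduce to a principal slice. If $X=\VarR(f)\neq\emptyset$, pick $\p\in X$ and set $K=\{\q:\|\q\|=\|\p\|\}$, which is legitimate because $\pi_1=\|\x\|^2$ after a harmless change of basic invariants ($G$ essential). By $G$-finiteness, $f$ has finitely many extreme points on $K$; let $\q$ be one realizing $\max_K f$ (or $\min_K f$, chosen so the extremal value has the same sign as needed — WLOG $f(0)<0$, so take a maximizer and note $f(\q)\ge f(\p_0)$ for $\p_0$ the absolute maximizer, which is $\ge 0$). The key point, exactly as in the proof of Theorem~\ref{thm:codim1}, is that if $\q$ lies in the open region $V\setminus\H_{n-1}(G)$ then $f$ cannot be locally extremal on the sphere unless it is, in the orbit-space picture, constant along some line direction — which it is not, because the orbit space is line-free (Lemma~\ref{lem:S_linefree}) and $f$ has strictly positive degree in the relevant coordinate. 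Hence some extreme point $\q$ lies on a reflection hyperplane.

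Next comes the induction that drives the dimension down to $\strNum(d)$. Suppose $\q\in\H_\ell(G)$ with $\ell\ge\strNum(d)+1$; I want to produce an extreme point in $\H_{\ell-1}(G)$. By definition of $\strNum(d)$, since $\q\in\Strat_{\ell+1}$ (note $\Strat=\H$ in the paper's notation, and $\q$ lies in a flat of dimension $\le\ell$, hence in $\Strat_{\ell+1}$), there is a reflection subgroup $G'\subseteq G$ with $\q$ $G'$-general and $2d_n(G')>d$. Let $F\cong\R^{\ell'}$ be the flat fixed pointwise by $G'$; since $\q$ is $G'$-general, $\q$ lies in the relative interior of $F$ and $G'$ acts essentially on $V=F\oplus F^\perp$ in the $F^\perp$-factor. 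Restrict $f$ to the affine subspace $\q|_F\times F^\perp$: this restriction $\bar f$ is $G'$-invariant on $F^\perp\cong\R^{n-\ell'}$ with $\deg\bar f\le d<2d_n(G')=2\deg\pi_n(G')$, so Theorem~\ref{thm:codim1} (applicable since $\bar f$ has degree $<2d_n(G')$, hence is at most linear in $\pi_n(G')$ by algebraic independence) gives a point $\q'$ in this slice lying on a reflection hyperplane of $G'$ with $\bar f(\q')\ge\bar f(\q)$. But a reflection hyperplane of $G'\subseteq G$ is contained in a reflection hyperplane of $G$ that does not contain $F$, so $\q'$ lies on strictly more independent reflection hyperplanes of $G$ than $\q$ did, i.e.\ $\q'\in\H_{\ell-1}(G)$, and $f(\q')\ge f(\q)$. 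Iterating, and using path-connectedness of $\H$ together with $f(0)<0$, we reach a point of $\H_{\strNum(d)}$ where $f\ge 0$, hence (by connectedness and the intermediate value theorem along a path in $\H$) a point of $\H_{\strNum(d)}$ where $f=0$; the converse direction of the equivalence is trivial.

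The main obstacle I expect is bookkeeping the restriction step correctly: one must check that restricting $f$ to the slice through $\q$ transverse to $F$ genuinely yields a $G'$-invariant polynomial of degree $\le d$ on $F^\perp$ to which Theorem~\ref{thm:codim1} applies, and — crucially — that a point general for $G'$ but on a $G'$-reflection hyperplane after the move really does drop the $G$-stratum index by one, i.e.\ that the new hyperplane is independent from the $n-\ell$ hyperplanes already containing $\q$. This is where the definition of $\strNum$ (demanding $\q$ be $G'$-\emph{general}, so that $\q$ is in the open face of $F$ and any new $G'$-hyperplane meets $F$ only in a proper subspace) does exactly the work needed. A secondary subtlety is the sign/connectedness argument converting ``$f\ge 0$ somewhere on $\H_{\strNum(d)}$'' into ``$f=0$ somewhere on $\H_{\strNum(d)}$'': this uses that $\H$ is path-connected and $f(0)<0$ with $0\in\H_0\subseteq\H_{\strNum(d)}$, so a path from $0$ to the nonnegative point inside $\H_{\strNum(d)}$ crosses a zero of $f$ — but one should make sure such a path can be taken inside $\H_{\strNum(d)}$ rather than merely inside $\H$, which holds because $\H_{\strNum(d)}$ is itself connected for $\strNum(d)\ge 1$ (and the case $\strNum(d)=0$ forces the zero to be the origin, handled separately).
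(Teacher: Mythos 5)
Your overall strategy --- descending through the strata one dimension at a time by repeatedly applying (the proof of) Theorem~\ref{thm:codim1} inside a slice --- is not the paper's, and the descent step has a genuine gap. When you pass from $\q\in\H_\ell(G)\setminus\H_{\ell-1}(G)$ to a maximizer $\q'$ of the restriction $\bar f$ on a sphere inside the slice $\mathrm{proj}_F(\q)+F^\perp$, you gain membership in one reflection hyperplane of $G'$, but you may simultaneously \emph{lose} the $n-\ell$ independent reflection hyperplanes of $G$ that contained $\q$: nothing forces those hyperplanes to contain the directions in $F^\perp$ along which you moved. Concretely, in $B_3$ take $\q=(1,1,2)\in\H_2\setminus\H_1$ and $G'\cong B_2$ acting on the last two coordinates (so $\q$ is $G'$-general); the slice is $\{(1,s,t)\}$, and a maximizer on the circle $s^2+t^2=5$ landing on, say, $\{x_2=0\}$ gives $(1,0,\sqrt5)$, which again lies on exactly one reflection hyperplane of $B_3$ --- no progress. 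So the assertion that ``$\q'$ lies on strictly more independent reflection hyperplanes of $G$ than $\q$ did'' is a non sequitur, and the induction need not terminate in $\H_{\strNum(d)}$. (A smaller confusion: $\q$ being $G'$-general means $\q$ avoids \emph{all} reflection hyperplanes of $G'$, hence in particular $\q\notin F=V^{G'}$; it does not lie in the relative interior of $F$.)

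The paper avoids induction entirely. It fixes once and for all a global maximizer $\p_+$ of $f$ on the sphere $K$ through the chosen zero $\p_0$ (using $G$-finiteness to guarantee finitely many extreme points on $K$) and shows \emph{directly} that $\p_+\in\H_{\strNum(d)}$: if not, the definition of $\strNum(d)$ supplies a reflection subgroup $G'$ with $\p_+$ $G'$-general and $2d_n(G')>d$; writing $f=F(\pi_1',\dots,\pi_n')$ in basic invariants of $G'$ with $\pi_1'=\|\x\|^2$, the image $\pi'(\p_+)$ is an \emph{interior} point of the $G'$-orbit space, $F$ is at most linear in $y_n$, and so either $F$ strictly changes along the $e_n$-line (contradicting maximality at an interior point of the slice $\overline{K}$) or $F$ is locally constant along it (producing infinitely many extreme points and contradicting $G$-finiteness). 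The point is that a maximizer over the whole sphere in $V$ is automatically a maximizer over every slice through it for every admissible $G'$ --- exactly the property your step-by-step version forfeits after the first move. Your opening paragraph (reduction to a sphere via $\pi_1=\|\x\|^2$, the normalization $f(0)\le 0$, connectedness of the target stratum, and the remark that the $k$-sparse statement runs identically with $d_n(G')>d_k(G)$ in place of $2d_n(G')>d$) matches the paper; the fix is to replace your induction by this one-shot contradiction.
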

\begin{proof}
    We only give a proof for the second result. The proof of the first is
    analogous. Suppose $\VarR(f) \neq \emptyset$.  We may assume that $f(0)
    \le 0$ and, since $\H_{\strNum(d)}$ is connected, it suffices to show that
    there is some point $\p_+ \in \H_{\strNum(d)}$ with $f(\p_+) \ge 0$.  
    
    By assumption, there is a zero  $\p_0 \in \VarR(f)$ such that $f$ has only
    finitely many extreme points restricted to $K = \{ \q : \|\q\| =
    \|\p_0\|\}$.  Let $\p_+ \in K$ be a point maximizing $f$ over $K$ and
    hence $f(\p_+) \ge f(\p_0) \ge 0$. We claim that $\p_+ \in
    \H_{\strNum(d)}$. Otherwise, there is a reflection subgroup $G' \subset G$
    such that $\p_+ \not \in \H_{n-1}(G')$ and $2d_n(G') > d$.  Let
    $\pi'_1,\dots,\pi'_n$ be a choice of basic invariants of $G'$ and, without
    loss of generality, $\pi'_1(\x) = \|\x\|^2$. Thus, $\op_+ = \pi'(\p_+)$ is
    in the interior of $\Ospace = \pi'(V)$. We can write $f =
    F(\pi'_1,\dots,\pi_n)$ for some $F \in \R[y_1,\dots,y_n]$. On the level of
    orbit spaces, our assumption states that restricted to $\overline{K} =
    \pi(L) = \{ \y \in \Ospace : y_1 = \pi_1(\p_+)\}$, the polynomial $F$ has
    only finitely many extreme points.  However, $F$ is linear in $y_n$ and
    thus $\op_+$ is a maximum only if $\op_+ \in \partial \overline{K}
    \subseteq \partial \Ospace$. This is a contradiction.
\end{proof}

\section{Bounds from parabolic subgroups}\label{sec:StrNum}

The numbers $\strNum(d)$ and $\SecStrNum(k)$ defined in
Section~\ref{sec:higher_codim} are difficult to compute in general.  In this
section, we compute upper bounds on these numbers coming from parabolic
subgroups.  Let $G$ be a finite irreducible reflection group. A fundamental
domain $\sigma \subset V$, as defined in Section~\ref{sec:codim1}, is a
simplicial cone of dimension $n = \dim V$. Let $H_1,\dots, H_n \in \H(G)$ be
the reflection hyperplanes that are facet-defining for $\sigma$ and let
$\Delta = \{s_1,\dots, s_n\} \subset G$ be the corresponding reflections. For
$i \neq$, we denote by $m(i,j)$ the order of the cyclic group generated by
$s_is_j$. The \Defn{Dynkin diagram} $D$ of $G$ is the labelled graph with
vertex set $\{1,\dots,n\}$ and edges $ij$ whenever $m(i,j) \ge 3$ and edge
labelling $m(i,j)$; see~\cite[Sec.~2.1]{hum} for details.  A subgroup of $G$
is \Defn{parabolic} if it is conjugate to a subgroup generated by a subset of
the reflections in $\Delta$; cf.~\cite[Sec.~1.10]{hum}.

\begin{lem}\label{lem:leaf}
    Fix a finite irreducible reflection group $G$ with Dynkin diagram $D$.
    Let $D' \subset D$ be a subdiagram obtained by removing a node from $D$
    and let $H \in \H(G)$ be a reflection hyperplane. Then there is a
    parabolic subgroup $W \subset G$ with Dynkin diagram $D'$ and $H$ is not
    a reflection hyperplane of $W$. 
\end{lem}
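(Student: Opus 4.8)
The plan is to work with a fixed root system. Choose a simple system $\Delta = \{\alpha_1,\dots,\alpha_n\}$ for $G$ with associated root system $\Phi$, so that the Dynkin diagram $D$ has vertex set $\Delta$. Deleting the node $\alpha$ from $D$ corresponds to the subset $I := \Delta\setminus\{\alpha\}$, and the standard parabolic subgroup $W_I := \langle s_\beta : \beta\in I\rangle$ has Dynkin diagram exactly the induced subdiagram $D'$. Its root system is $\Phi_I := \Phi\cap\operatorname{span}_\R(I)$, and its reflection hyperplanes are precisely $\{\beta^\perp : \beta\in\Phi_I\}$. Every parabolic subgroup of type $D'$ arises as a conjugate $wW_Iw^{-1}$ with $w\in G$; since $w$ is an orthogonal transformation, it carries the base $I$ to a base $wI$ of $w\Phi_I$ preserving all lengths and angles, so $wW_Iw^{-1}$ again has Dynkin diagram $D'$, and its reflection hyperplanes are $\{(w\beta)^\perp : \beta\in\Phi_I\}$.

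Next I would reduce the statement to an orbit condition. Write the given hyperplane as $H = \gamma^\perp$ for some root $\gamma\in\Phi$. Then $H$ is a reflection hyperplane of $wW_Iw^{-1}$ if and only if $(w\beta)^\perp = \gamma^\perp$ for some $\beta\in\Phi_I$, i.e.\ $\gamma \in \pm w\Phi_I$, i.e.\ $w^{-1}\gamma\in\Phi_I$ (using $\Phi_I=-\Phi_I$). Hence it suffices to find $w\in G$ with $w^{-1}\gamma\notin\Phi_I$; equivalently, it suffices to show that the $G$-orbit $G\gamma$ is \emph{not} contained in $\Phi_I$.

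The key step is a dimension count. Suppose for contradiction that $G\gamma\subseteq\Phi_I\subseteq\operatorname{span}_\R(I)$. Then $\operatorname{span}_\R(G\gamma)\subseteq\operatorname{span}_\R(I)$. The left-hand side is a $G$-invariant subspace of $V$ and is nonzero since $\gamma\neq 0$; because $G$ is irreducible, its reflection representation on $V$ is irreducible, so $\operatorname{span}_\R(G\gamma)=V$. But $\operatorname{span}_\R(I)$ has dimension $|I| = n-1 < n = \dim V$, a contradiction. Therefore there is $w\in G$ with $w^{-1}\gamma\notin\Phi_I$, and $W := wW_Iw^{-1}$ is the desired parabolic subgroup with Dynkin diagram $D'$ for which $H$ is not a reflection hyperplane.

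I do not expect a serious obstacle here: once the standard identifications are in place (reflection hyperplanes of a standard parabolic and of its conjugates, and the equivalence of "irreducible reflection group" with "irreducible reflection representation", as in Humphreys~\cite{hum}), the proof is exactly the dimension count above. The only point that requires a little care is checking that conjugation does not change the Dynkin diagram, which follows because conjugation by $w\in G\subset O(V)$ is an isometry and hence preserves the Cartan matrix of the base $I$.
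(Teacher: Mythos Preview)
Your proof is correct and in fact cleaner than the argument given in the paper. Both proofs reduce to showing that the $G$-orbit of a root $\gamma$ cannot be contained in the root system $\Phi_I$ of the standard parabolic subgroup $W_I$, but the methods diverge at this point. The paper invokes the fact that $G$ acts transitively on roots of the same length, so that $G\gamma\subseteq\Phi_I$ would force $\Phi_I$ to contain all roots of the length of $\gamma$; this immediately handles the simply laced types and $I_2(m)$ for $m$ odd, but the remaining cases $B_n$, $F_4$, and $I_2(2m)$ have two root lengths and are dealt with by explicit inspection and counting. Your dimension argument, by contrast, uses only that the reflection representation of an irreducible finite reflection group is irreducible: the span of $G\gamma$ is a nonzero $G$-invariant subspace, hence all of $V$, and cannot sit inside the $(n-1)$-dimensional $\operatorname{span}_\R(I)$. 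This is uniform across all types and avoids any case analysis, which is a genuine improvement.
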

\begin{proof}
    Let $W$ be a parabolic subgroup with Dynkin diagram $D'$.  Since every
    parabolic subgroup with Dynkin diagram $D'$ is conjugate to $W$, it
    suffices to show that for every $H\in\H(G)$ there is a $g \in G$ such that $g H \not\in \H(W)$. 

Unless $G$ is of type $F_4$, $B_n$ or $I_2(2m)$ for $m > 1$, the hyperplanes in $\H(G)$ form a single $G$-orbit (see~\cite[Sec.~2.9, Sec.~2.10]{hum}). Hence, the claim follows, since $\H(W)\subsetneq\H(G)$.
 For $F_4$, the possible
    proper parabolic subgroups are $B_3$ and $A_1 \times A_2$ and the result
    follows by inspection.  For $I_2(2m)$, there are two orbits with each $2m
    \ge 4$ roots whereas the only nontrivial proper parabolic subgroup is
    $A_1$. For $B_n$, this follows from counting the number of elements in
    each of the two orbits.
\end{proof}

The lemma yields the following result about finite reflection groups that
might be interesting in its own right.

\begin{prop}\label{subdiagram}
    Let $G$ be a finite irreducible reflection group with Dynkin diagram $D$
    acting on a real vector space $V$. For $k \ge 1$, let $\p \in \H_k
    \setminus \H_{k-1}$ and $D' \subset D$ be a connected subdiagram on $k$
    nodes. Then there is a parabolic subgroup $W \subset G$ with Dynkin
    diagram $D'$ such that $\p$ is $W$-general.
\end{prop}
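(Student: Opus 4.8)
The plan is to induct on $n-k$, using Lemma~\ref{lem:leaf} — in a mildly strengthened form — at each step. If $k=n$ then $\p\notin\H_{n-1}$, so $\p$ is $G$-general and $W=G$ works. Assume $k<n$. Then $D'$ is a proper connected subtree of the tree $D$, so some leaf $v$ of $D$ does not lie in $D'$ (otherwise the connected subtree $D'$ would contain the subtree of $D$ spanned by the leaves of $D$, which is all of $D$); hence $\bar D:=D\setminus\{v\}$ is again a connected Dynkin diagram, of rank $n-1$, and $D'\subseteq\bar D$.

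The inductive step reduces to the following claim: there is a parabolic subgroup $\bar W\subset G$ with Dynkin diagram $\bar D$ whose point-stabilizer $\bar W_\p=\bar W\cap G_\p$ has rank $(n-1)-k$. Granting this, $\bar W$ is an irreducible reflection group of rank $n-1$, $\p$ lies in $\H_k(\bar W)\setminus\H_{k-1}(\bar W)$, and $D'$ is a connected subdiagram of $\bar D$ on $k$ nodes, so the inductive hypothesis applied to $\bar W$ yields a parabolic subgroup $W\subseteq\bar W$ with diagram $D'$ for which $\p$ is $W$-general. As a parabolic subgroup of a parabolic subgroup is again parabolic (a direct consequence of Steinberg's theorem that point-stabilizers are generated by the reflections they contain), $W$ is parabolic in $G$, which finishes the induction.

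To produce $\bar W$: by Steinberg's theorem $G_\p$ is a parabolic subgroup of $G$ of rank $n-k$. Fix a maximal proper parabolic subgroup $P_0\subsetneq G_\p$, so $P_0$ has rank $(n-k)-1$ and is parabolic in $G$. Since the only rank-$(n-k)$ parabolic subgroup of $G_\p$ is $G_\p$ itself, it is enough to find $\bar W$ with diagram $\bar D$ satisfying $P_0\subseteq\bar W$ and $G_\p\not\subseteq\bar W$: then $\bar W\cap G_\p$ is a parabolic subgroup of $G_\p$ containing $P_0$ but properly contained in $G_\p$, hence of rank exactly $(n-1)-k$. Passing to fixed spaces — every reflection hyperplane of a parabolic subgroup $U$ contains $\mathrm{Fix}(U)$, and conversely $\H(U)=\{H\in\H(G):\mathrm{Fix}(U)\subseteq H\}$ — these two conditions say that the fixed line $\mathrm{Fix}(\bar W)$ lies in the $(k+1)$-dimensional flat $\mathrm{Fix}(P_0)$ but avoids its hyperplane $\mathrm{Fix}(G_\p)$.

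The main obstacle is exactly this existence statement, a strengthening of Lemma~\ref{lem:leaf}: there, one finds a diagram-$\bar D$ parabolic \emph{missing} one prescribed reflection hyperplane; here such a parabolic must additionally \emph{contain} the prescribed parabolic $P_0$. I expect to establish it by the same argument as Lemma~\ref{lem:leaf}: in terms of roots, one must show that for a suitable root $\alpha\in\Phi(G_\p)\setminus\Phi(P_0)$ there is $g\in G$ with $g\Phi(P_0)\subseteq\Phi(W_{\bar D})$ and $g\alpha\notin\Phi(W_{\bar D})$, where $W_{\bar D}$ is the standard parabolic with diagram $\bar D$; transitivity of $G$ on roots of a fixed length together with properness of proper parabolic root subsystems disposes of all types except $F_4$, $B_n$ and $I_2(2m)$ for $m>1$, which are handled by inspection — now relative to $P_0$ — as in the proof of Lemma~\ref{lem:leaf}. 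The remaining bookkeeping (the dimension counts, and the transfer of parabolicity along the chain $W\subseteq\bar W\subseteq G$) is routine.
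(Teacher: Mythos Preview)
Your overall strategy---induct on $n-k$, peel off a leaf of the Dynkin diagram at each step, and use that parabolics of parabolics are parabolic---is exactly the paper's. The gap is in the ``mildly strengthened'' Lemma~\ref{lem:leaf}. As stated it is false for an arbitrary choice of $P_0$, and your proposed proof does not establish it.

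Take $G=B_n$ with $n\ge4$, $\p=e_1$, so $G_\p\cong B_{n-1}$ acting on coordinates $2,\dots,n$ and $k=1$. With $D'=\{\alpha_1\}$ the only leaf you may remove is $\alpha_n$, giving $\bar D=A_{n-1}$. Now ``fix'' $P_0=B_{n-2}$ on coordinates $3,\dots,n$, a maximal parabolic of $G_\p$. Every parabolic of type $A_{n-1}$ in $B_n$ is a conjugate of $\SymGrp_n$; its reflection hyperplanes are all of the form $x_i=\pm x_j$ and never include $x_3=0\in\H(P_0)$. Hence \emph{no} $\bar W$ of type $\bar D$ contains $P_0$, and your reduction cannot even get started for this $P_0$. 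The transitivity-on-root-lengths argument from Lemma~\ref{lem:leaf} does not rescue this: it produces a $g$ moving a single root out of $\Phi(W_{\bar D})$, but gives no control whatsoever over where $g\Phi(P_0)$ lands; here there is no $g$ with $g\Phi(P_0)\subseteq\Phi(W_{\bar D})$ at all, since $A_{n-1}$ has no subsystem of type $B_{n-2}$.

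The remedy is to drop the demand that $\rk(\bar W_\p)$ equal $(n-1)-k$ exactly. Plain Lemma~\ref{lem:leaf} already gives a $\bar W$ of type $\bar D$ missing one chosen hyperplane $H_1\in\H(G_\p)$, hence $\rk(\bar W_\p)\le (n-1)-k$; this is what the paper uses. If the rank drops further, say $\p\in\H_{k'}(\bar W)\setminus\H_{k'-1}(\bar W)$ with $k'>k$, first extend $D'$ to a connected subdiagram $D''\subseteq\bar D$ on $k'$ nodes, apply the inductive hypothesis to obtain a parabolic $W''$ of type $D''$ for which $\p$ is $W''$-general, and then pass to its standard parabolic $W\subseteq W''$ of type $D'$; since $\H(W)\subseteq\H(W'')$, $\p$ remains $W$-general. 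This closes the argument without any strengthening of Lemma~\ref{lem:leaf}.
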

\begin{proof}
    We argue by induction on $s = \dim V - k$.  For $s = 0$, $\p \in V
    \setminus \H_{\dim V -1}$ and $\p$ is by definition $G$-general. Otherwise, let $D_1
    \subset D$ be a subdiagram obtained by removing a leaf such that $D'
    \subseteq D_1$ and let $H_1$ be a reflection hyperplane of $G$ containing
    $\p$.  We may use
    Lemma~\ref{lem:leaf} to obtain a parabolic subgroup $W_1$ with Dynkin
    diagram $D_1$ and not containing $H_1$ as a reflection hyperplane. In
    particular, $\p$ is contained in precisely $s-1$ linearly independent
    reflection hyperplanes of $W_1$. By induction, there is a parabolic
    subgroup $W \subseteq W_1$ with Dynkin diagram $D'$ for which $\p$ is
    $W$-general. In particular, $W$ is a parabolic subgroup of $G$ which
    concludes the proof.
\end{proof}

For $I \subseteq \Delta$, let $W_I$ be the parabolic subgroup generated by the
reflections $I$. We define
\[
    \parNum(d) \ := \ \min\{ |I|-1 : I\subseteq \Delta, 2d_n(W_I) > d \},
\]
and, analogously, we define $\SecParNum(k) := \parNum(2d_k(G))$. Since $G$ acts transitively on the set of fundamental domains, these definitions do not depend on the choice of $\sigma$. Proposition~\ref{subdiagram} implies the following bound on $\strNum$.

\begin{cor}\label{cor:parabolic_bound}
    $ \strNum(d) \le \parNum(d),  $
    for all $d \ge 0$.
\end{cor}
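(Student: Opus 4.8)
The plan is to prove $\strNum(d)\le\parNum(d)$ by showing that for every point $p$ whose smallest supporting flat $F_p$ has dimension strictly larger than $\parNum(d)$ there is a \emph{parabolic} subgroup $W\subseteq G$ with $p$ $W$-general and $2d_n(W)>d$, i.e.\ a witness of the kind demanded in the definition of $\strNum(d)$. Since $\parNum(d)=\min\{|I|-1:I\subseteq\Delta,\ 2d_n(W_I)>d\}$, so that $r:=\parNum(d)+1$ is the least rank of a standard parabolic with $2d_n>d$, producing such a $W$ for all $p$ with $\dim F_p>\parNum(d)$ shows that the condition in the definition of $\strNum(d)$ is already met at level $\parNum(d)$, which gives $\strNum(d)\le\parNum(d)$. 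I would first fix a minimizing set $I_0$ with $|I_0|=r$ and note that $I_0$ may be taken connected: $W_{I_0}$ is the direct product of the irreducible parabolics attached to the connected components of the subdiagram $I_0$ and $d_n(W_{I_0})$ is the maximum of their top degrees, so throwing away all but one component preserves $2d_n>d$, and minimality of $|I_0|$ then forces $I_0$ to be a single connected subdiagram.

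Now take $p\in\H_k\setminus\H_{k-1}$ with $k\ge r$ (the case $k=0$, which only occurs when $\parNum(d)\le -1$, is handled by $W=\{e\}$). As $G$ is irreducible, its Dynkin diagram $D$ is connected with $n\ge k$ nodes, so starting from the connected subdiagram $I_0$ (of size $r\le k$) and repeatedly attaching a node of $D$ adjacent to the current set, I obtain a connected subdiagram $D'\subseteq D$ on exactly $k$ nodes with $I_0\subseteq D'$. Because $W_{I_0}\subseteq W_{D'}$ and the largest degree $d_n$ does not decrease when a connected Dynkin diagram is enlarged by attaching nodes — here $d_n$ equals the Coxeter number, and this is a routine check against the tables in~\cite{hum} — we have $2d_n(W_{D'})\ge 2d_n(W_{I_0})>d$. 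Proposition~\ref{subdiagram}, applied to $p\in\H_k\setminus\H_{k-1}$ and the connected $k$-node subdiagram $D'$, now furnishes a parabolic subgroup $W\subseteq G$ with Dynkin diagram $D'$ for which $p$ is $W$-general; as $W$ is conjugate to $W_{D'}$ it has the same degrees, so $2d_n(W)=2d_n(W_{D'})>d$, as required.

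The step I expect to be the crux is making Proposition~\ref{subdiagram} applicable: it requires a connected subdiagram on \emph{exactly} $k=\dim F_p$ nodes, so $I_0$ itself is not enough and must be extended, and I need to know both that the extension stays connected (this is exactly where irreducibility of $G$, hence connectedness of $D$, is used) and that $2d_n$ stays above $d$ along the way (the monotonicity of the largest degree under enlarging a connected diagram). Neither is deep, but both need to be stated carefully; together with the reduction of $I_0$ to a connected diagram, they are the only ingredients beyond Proposition~\ref{subdiagram}.
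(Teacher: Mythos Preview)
Your proof is correct and follows exactly the route the paper intends: the paper simply states that Proposition~\ref{subdiagram} implies the corollary and gives no further argument, and your write-up supplies precisely the details needed to make that implication go through. The three ingredients you isolate---reducing the minimizer $I_0$ to a connected subdiagram, extending it inside the connected Dynkin diagram $D$ to a connected $k$-node subdiagram $D'$, and invoking monotonicity of the top degree (Coxeter number) along connected parabolic inclusions---are exactly what is required, and your example-independent justification via table inspection is the standard one.

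One small remark: the choice of $D'$ matters, as you implicitly note. For instance, in $E_8$ with $d=11$ one has $\parNum(d)=3$ via $D_4$, but the connected $4$-node subdiagram $A_4$ has $2d_{\mathrm{top}}=10\le 11$; so one cannot take an arbitrary connected $k$-node subdiagram in Proposition~\ref{subdiagram}, and your step of extending the specific $I_0$ (rather than choosing $D'$ freely) is essential.
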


The clear advantage is a simple way to compute upper bounds on $\strNum(d)$
from the knowledge of parabolic subgroups of reflection groups;
cf.~\cite{hum}. The explicit values are given in Table~\ref{tab:parNum}.
However, not every reflection subgroup is parabolic (e.g. $D_n\subseteq B_n$,
$I_2(m) \subseteq I_2(2m)$).  Nevertheless, we conjecture that $\strNum(d)$ is
attained at a parabolic subgroup.

\begin{conj}
    For any finite reflection group $G$,
    $
        \strNum(d) \ = \ \parNum(d)
    $
    for all $d$.
\end{conj}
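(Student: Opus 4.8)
Toward this conjecture, recall that the inequality $\strNum(d) \le \parNum(d)$ is Corollary~\ref{cor:parabolic_bound}; what remains is the reverse inequality, i.e.\ that in the definition of $\strNum(d)$ the witnessing reflection subgroup can always be taken to be a standard parabolic. The plan is to reduce this to a purely root-theoretic domination statement and then verify that statement type by type.

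First I would set up the dictionary between the two sides. For a generic point $\p$ of a flat $F \subseteq V$, the reflection subgroups $G' \subseteq G$ in which $\p$ is general are precisely those whose root system $\Phi(G')$ is disjoint from the parabolic sub-root-system $\Phi_F := \Phi(G) \cap F^{\perp}$ (the roots vanishing on $F$), a system of rank $\codim F$. Unwinding the definition of $\strNum(d)$, the reverse inequality then amounts to producing, for each $d$, a parabolic sub-root-system $\Psi \subseteq \Phi := \Phi(G)$ of rank $n-\parNum(d)$ with the property that $2\,d_n(\Phi') \le d$ for every sub-root-system $\Phi' \subseteq \Phi$ disjoint from $\Psi$ (here $d_n(\Phi')$ is one plus the largest exponent of $W(\Phi')$). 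Two simplifications come for free. Since $d_n$ is monotone under deleting simple roots — the highest root of a sub-root-system has height no larger than that of the ambient one — and since every connected Dynkin subdiagram (a subtree) extends to connected subdiagrams of all larger sizes, the definition of $\parNum(d)$ already guarantees that no standard parabolic of rank $\le \parNum(d)$ violates $2\,d_n \le d$; so the danger comes only from sub-root-systems $\Phi'$ that are not standard parabolics, or that have rank exceeding $\parNum(d)$ yet still avoid $\Psi$. Moreover $\parNum$, $d_n$, and the stratification are all multiplicative over the irreducible factors of $G$, so one may assume $G$ irreducible.

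The substance is then a case analysis via the Borel--de Siebenthal description of reflection subgroups: each maximal one is obtained by deleting a node from the Dynkin diagram or from the extended Dynkin diagram, and one iterates. For $A_n$ there is nothing to prove, since every reflection subgroup is conjugate to a standard parabolic. For $B_n$ and $D_n$ the extra sub-root-systems arising at the bottom are of types $A$ and $D$, and one checks — for a well-chosen $\Psi$ — that their top degrees are dominated by those of the $B$-type standard parabolics on the same number of nodes. For $F_4$, $H_3$, $H_4$, and $E_6,E_7,E_8$ one enumerates the sub-root-systems up to conjugacy through the extended-diagram recursion and, for each candidate value of $\parNum(d)$, exhibits a witness $\Psi$ and verifies the domination; this is finite, if tedious. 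The delicate case — and the one to look at first — is $I_2(2m)$, which is exactly the example flagged before the conjecture: the only proper standard parabolic is $A_1$, while the divisor subgroups $I_2(k)$, $k \mid 2m$, $k \ge 3$, are non-parabolic rank-$2$ reflection subgroups of larger top degree, so one must check that for a suitable witness flat no such $I_2(k)$ with $2\,d_n > d$ is general at the witness point.

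The step I expect to be the genuine obstacle is precisely this control of non-parabolic subgroups: the clean statement one wants is that the lattice of reflection subgroups of $G$, ordered by the pair $(\mathrm{rank}, d_n)$, is dominated by its sublattice of standard parabolics even after imposing the disjointness constraint against $\Psi$, and I do not see a uniform, type-free argument for this. Because a closer look at $I_2(2m)$ — e.g.\ an $I_2(3)$ sitting inside $I_2(12)$ on a suitable triple of reflection lines — already makes the domination delicate for small $d$, I would, before anything else, pin down exactly which stratum the quantifier in the definition of $\strNum(d)$ ranges over and confirm that the conjecture is literally correct for the dihedral family; the resulting shape of the witness $\Psi$ would then guide the exceptional cases.
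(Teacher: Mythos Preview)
The statement you are attacking is presented in the paper as a \emph{conjecture}; the authors do not prove it. All that the surrounding text establishes is the one-sided bound of Corollary~\ref{cor:parabolic_bound}, after which they note that non-parabolic reflection subgroups such as $I_2(m)\subset I_2(2m)$ might in principle make the bound strict, and then record the expectation that they do not. There is therefore no proof in the paper against which to compare your outline.

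Your write-up is, as you say yourself, a programme rather than a proof. Two remarks on it. First, the caveat you close with---pinning down exactly what the quantifier in the definition of $\strNum(d)$ ranges over---is not a side issue but the central one: taken literally, ``for every $\p\in\H_{\ell+1}$'' already fails at the origin, and it does not coincide with the way $\strNum$ is invoked in the proof of Theorem~\ref{thm:DegPrinc} (which needs the implication for $\p\notin\H_{\strNum(d)}$). Your root-theoretic reformulation inherits this ambiguity; in particular, exhibiting a parabolic $\Psi$ of rank $n-\parNum(d)$ whose complement supports no large-degree sub-root-system produces a \emph{single} point with no witness, which bounds $\strNum$ from one side only, and which side that is depends on the reading. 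Second, the dihedral test you single out is indeed decisive and should be carried out before the general case analysis: already in $I_2(6)$ the two non-parabolic copies of $A_2$ supply rank-$2$ witnesses with $2d_n=6$ for every point lying on a single reflection line, witnesses that no proper parabolic (namely $A_1$, with $2d_n=4$) can match. Whether this forces $\strNum(d)<\parNum(d)$ for $d\in\{4,5\}$---and hence falsifies the conjecture---or is absorbed by the intended definition depends entirely on how the quantifier is resolved. Settling that will either dispose of the conjecture outright or tell you precisely what the type-by-type verification must show.
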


\newcommand\dd{\text{ --- }}%
\begin{table}
    \newcommand\mc[1]{\multicolumn{2}{c|}{#1}}

\[
\scriptsize
\begin{array}{c|r@{\dd}l|c|c|c||r@{\dd}l|c}
    G & \multicolumn{2}{c|}{d} & \parNum(d) & W&d_n(W) & \mc{k} & \SecParNum(k)\\
        \hline
        \hline
        A_{n-1} / \mathfrak{S}_n & 0 & 2n-1 & \lfloor d/2\rfloor& A_{\lfloor d/2\rfloor}&\lfloor d/2\rfloor+1 & 0 &  n-1 & k\\
        \hline
        B_n & 0 & 4n-1 & \lfloor d/4\rfloor& B_{\lfloor d/4\rfloor+1} & 2(\lfloor d/4\rfloor+1)& 0 & n-1 & k\\
        \hline
         
        D_n& 0 & 4n-5 &\lfloor d/4\rfloor+1  & D_{\lfloor d/4\rfloor+2} &2(\lfloor d/4\rfloor+1) & 0 & \lfloor\frac{n}{2}\rfloor & k+1\\
        &\multicolumn{1}{c}{}&&&&& \lfloor\frac{n}{2}\rfloor+1 & n & k\\
        \hline
        I_2(m) & 1 & 2m-1 & 1&I_2(m) & m& \mc{1} & 1\\

        \hline
        E_6 & 1 & 5 & 1&A_2&3& \mc{1} & 1\\
            & 6 & 7 & 2&A_3&4& \mc{2} & 3\\
            & 8 & 11 & 3&D_4&6& \mc{3} & 4\\
            & 12 & 15 & 4&D_5&8& \mc{4} & 5\\
            & 16 & 23 & 5&E_6&12& \mc{5} & 5\\
        \hline
        E_7 & 1 & 5 & 1&A_2&3& \mc{1} & 1\\
            & 6 & 7 & 2&A_3&4& \mc{2} & 4\\
            & 8 & 11 & 3&D_4&6& \mc{3} & 5\\
            & 12 & 15 & 4&D_5&8& \mc{4} & 5\\
            & 16 & 23 & 5&E_6&12& \mc{5} & 6\\
            & 24 & 35 & 6&E_7&18& \mc{6} & 6\\
\hline
    E_8 & 1 & 5 & 1&A_2&3& \mc{1} & 1\\
        & 6 & 7 & 2&A_3&4& \mc{2} & 5\\
        & 8 & 11 & 3&D_4&6& \mc{3} & 6\\
        & 12 & 15 & 4&D_5&8& \mc{4} & 6\\
        & 16 & 23 & 5&E_6&12& \mc{5} & 7\\
        & 24 & 35 & 6&E_7&18& \mc{6} & 7\\
        & 36 & 59 & 7&E_8&30& \mc{7} & 7\\
    \hline
    F_4 & 1 & 7 & 1&B_2&4& \mc{1} & 1\\
        & 8 & 11 & 2&B_3&6& \mc{2} & 3\\
        & 12 & 23 & 3&F_4&12& \mc{3} & 3\\
    \hline
    H_3 & 1 & 9 & 1&I_2(5)&5& \mc{1} & 1\\
        & 10 & 19 & 2&H_3&10& \mc{2} & 2\\
    \hline
    H_4 & 1 & 9 & 1&I_2(m)&5& \mc{1} & 1\\
        & 10 & 19 & 2&H_3&10& \mc{2} & 3\\
        & 20 & 59 & 3&H_4&30& \mc{3} & 3\\
\end{array}
\quad \quad \quad
    \]
    \caption{Computation for $\parNum(d)$ and $\SecParNum(k)$.
    $a\dd b$ refers to the range $a,a+1,...,b$. The column $W$ gives the
    parabolic subgroup that attains $\parNum$.}\label{tab:parNum}
\end{table}

\section{Adjoint representations of Lie groups}
\label{sec:lie}

\newcommand\g{\mathfrak{g}}
In this last section, we extend some of our results to polynomials invariant
under the action of a Lie group. More precisely, we consider the case of a
real simple Lie group $G$ with the adjoint action on its Lie algebra $\g$.
\newcommand\GL{\mathrm{GL}}%
\newcommand\SL{\mathrm{SL}}%
\newcommand\gl{\mathfrak{gl}}%
\newcommand\sln{\mathfrak{sl}}%
\newcommand\tr{\mathrm{tr}}%
\newcommand\X{\mathbf{X}}%
We illustrate our results for the case $G = \SL_n$.  Its Lie algebra $\sln_n$
is the vector space of real $n$-by-$n$ matrices of trace $0$.  The adjoint
action of $\SL_n$ on $\sln_n$ is by conjugation: $g \in \SL_n$ acts on $A \in
\sln_n$ by $g\cdot A := gAg^{-1}$. The following description of its ring of
invariants is well-known. We briefly recall the standard proof which
immediately suggests a connection to our treatment of reflection groups;
cf.~\cite[Ch.~12.5.3]{GW}.
\begin{thm}\label{thm:GL}
    For $n \ge 1$, $ \R[\sln_n]^G  =  \R[\PowSum_2,\dots,\PowSum_n],
    $
    where $\PowSum_k(A) = \tr(A^k)$ for $k=2,\dots,n$. Moreover,
    $\PowSum_2,\dots,\PowSum_n$ are
    algebraically independent.
\end{thm}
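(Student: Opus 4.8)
The plan is to run the classical Chevalley-restriction argument. Let $\mathfrak{h} = \{\mathrm{diag}(\lambda_1,\dots,\lambda_n) : \lambda_1 + \cdots + \lambda_n = 0\} \cong \R^{n-1}$ be the Cartan subalgebra of diagonal traceless matrices. Conjugation by a signed permutation matrix of determinant $1$ (a permutation matrix corrected by a diagonal $\pm1$-matrix, which commutes with the diagonal part) realizes the $\SymGrp_n$-action on $\mathfrak{h}$ by permuting the $\lambda_i$, and $\PowSum_k$ restricts to the power sum $p_k(\lambda) = \lambda_1^k + \cdots + \lambda_n^k$ on $\mathfrak{h}$. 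The inclusion $\R[\PowSum_2,\dots,\PowSum_n] \subseteq \R[\sln_n]^G$ is immediate since $\tr\bigl((gAg^{-1})^k\bigr) = \tr(A^k)$, so the content is the reverse inclusion together with algebraic independence.

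For algebraic independence it suffices to exhibit one point of $\sln_n$ where the differentials $d\PowSum_2,\dots,d\PowSum_n$ are linearly independent. At a diagonal matrix $D = \mathrm{diag}(\lambda_1,\dots,\lambda_n) \in \mathfrak{h}$ with pairwise distinct $\lambda_i$ one computes $d\PowSum_k|_D(\mathrm{diag}(\mu_1,\dots,\mu_n)) = k\sum_i \lambda_i^{k-1}\mu_i$. Since the vectors $(\lambda_i^{k-1})_{i=1}^n$ for $k = 1,\dots,n$ are the rows of an invertible Vandermonde matrix and the $k=1$ row is $\mathbf{1}$, the functionals for $k = 2,\dots,n$ restrict to linearly independent functionals on $\mathfrak{h} = \mathbf{1}^{\perp}$. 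Hence $\PowSum_2,\dots,\PowSum_n$ are algebraically independent as polynomials on $\sln_n$.

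For the inclusion $\R[\sln_n]^G \subseteq \R[\PowSum_2,\dots,\PowSum_n]$, take $g \in \R[\sln_n]^G$. Its restriction $g|_{\mathfrak{h}}$ is $\SymGrp_n$-invariant, so by the fundamental theorem of symmetric polynomials it is a polynomial in $e_1|_{\mathfrak{h}},\dots,e_n|_{\mathfrak{h}}$; since $e_1|_{\mathfrak{h}} = 0$ and, by Newton's identities, $e_2|_{\mathfrak{h}},\dots,e_n|_{\mathfrak{h}}$ are polynomials in $p_2|_{\mathfrak{h}},\dots,p_n|_{\mathfrak{h}}$, we get $g|_{\mathfrak{h}} = Q(\PowSum_2,\dots,\PowSum_n)|_{\mathfrak{h}}$ for some $Q \in \R[y_2,\dots,y_n]$. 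Then $h := g - Q(\PowSum_2,\dots,\PowSum_n)$ is $G$-invariant and vanishes identically on $\mathfrak{h}$. Every real traceless matrix with $n$ distinct real eigenvalues is $\SL_n(\R)$-conjugate to an element of $\mathfrak{h}$ --- diagonalize over $\R$, fix the sign of the conjugating matrix by a diagonal $\pm1$-matrix, then rescale by a positive scalar into $\SL_n(\R)$ --- so $h$ vanishes on this set, whence $h \equiv 0$ and $g = Q(\PowSum_2,\dots,\PowSum_n)$.

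The one step that deserves care --- and the place where working over $\R$ rather than $\C$ could look worrisome --- is the final vanishing claim, since diagonalizable matrices are \emph{not} Zariski dense in $\sln_n(\R)$. What saves the argument is that the set of traceless matrices with $n$ distinct \emph{real} eigenvalues is nonempty and Euclidean-open, hence Zariski dense, which is all that is needed. Alternatively one can complexify: an $\SL_n(\R)$-invariant polynomial on $\sln_n(\R)$ extends, by Zariski density of the real points of $\sln_n$, to an $\SL_n(\C)$-invariant polynomial on $\sln_n(\C)$, reducing to the classical complex statement, and the algebraic independence of the $\PowSum_k$ established above forces the resulting coefficients to be real.
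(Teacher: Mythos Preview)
Your proof is correct and takes essentially the same Chevalley-restriction approach as the paper: restrict to the diagonal Cartan subalgebra, use symmetric-function theory to write the restriction as a polynomial in the power sums, lift back, and conclude via Zariski density of the real-diagonalizable locus. The only differences are cosmetic---you justify algebraic independence with an explicit Jacobian/Vandermonde computation and are more careful about the density argument over $\R$, whereas the paper simply notes that the diagonalizable matrices contain a nonempty open set.
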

\begin{proof}
    We write $D \subset \sln_n$ for the set of diagonalizable matrices and we
    denote by $\lambda(A) = (\lambda_1,\dots,\lambda_n)$ the eigenvalues of $A
    \in D$. Then for any $A \in D$
    \[
        \PowSum_k(A) \ = \  \PowSum_k(\lambda(A)) \ = \ \lambda_1(A)^k + \lambda_2(A)^k +
        \cdots + \lambda_n(A)^k
    \]
    and $\PowSum_2,\dots,\PowSum_n$ are simply the power sums restricted
    to the linear subspace $\Delta \subset D$ of diagonal matrices. This shows
    that $\PowSum_2,\dots,\PowSum_n$ are algebraically independent. Now for a polynomial
    $f(\X) \in \R[\sln_n]$ invariant under the action of $\SL_n$, the
    restriction to $\Delta \cong \R^{n-1}$ is a polynomial $f(\x)$ that is
    invariant under $A_{n-1}$. Hence $f(\x) = F(\PowSum_2(\x),\dots,\PowSum_n(\x))$ for
    some $F \in \R[y_2,\dots,y_n]$. The polynomial $\tilde f(\X) =
    F(\PowSum_2(\X),\dots,\PowSum_n(\X))$ is invariant under $\SL_n$ and agrees with $f$
    on $D$. Since $D$ contains a nonempty open set, $f = \tilde f$ as
    required.
\end{proof}

\newcommand\SO{\mathrm{SO}}%
\newcommand\so{\mathfrak{so}}%
\newcommand\pf{\mathrm{pf}}%
\newcommand\torus{\mathfrak{t}}%
\newcommand\D{\mathcal{D}}%
In general, let $T \subseteq G$ be a maximal torus with Lie algebra $\torus
\subseteq \g$. If $N \subseteq G$ is the normalizer of $T$ in $G$, then $W =
N/T$ is a reflection group, the \Defn{Weyl group}, that acts on $\torus$. By
Chevalley's Restriction Theorem (see~\cite[Lem.~7]{HC}), the restriction
$\R[\g] \to \R[\torus]$ extends to an isomorphism of invariant rings.  This
yields that $\R[\g]^G$ is generated by homogeneous and algebraically
independent polynomials $\pi_1,\dots,\pi_m$ whose restriction to $\torus$
give a set of basic invariants for $W$.  We call a $G$-invariant variety
$X\subseteq\g$ $k$-sparse if $X = \VarR(f_1,\dots,f_m)$ for some
$f_1,\dots,f_m \in \R[\pi_1,\dots,\pi_k]$. The \Defn{discriminant locus} $\D
\subset \g$ of $G$ is the Zariski closure of the orbit of the reflection
arrangement $\H(W) \subset \torus$ under $G$. By the result of
Steinberg~\cite{stein} and the Restriction theorem, this is a real
$G$-invariant hypersurface given by the vanishing of a single polynomial,
called the \Defn{discriminant} of $G$. Since $G \cdot \torus$ is dense in
$\g$, $\D$ is the closure of the set of points with nontrivial stabilizer;
see~\cite[Ch.~6]{Hel}.  This yields a stratification of $\g$ by defining
$\D_i$ to be the closure of the orbit of $\H_i$, which corresponds to the
points for which the discriminant vanishes up to order $n-i$. Hence, the
results from the previous sections generalize to Lie groups.  

For the special orthogonal group $\SO_{n}$, its Lie algebra $\so_n \subset
\sln_n$ is the vector space of skew-symmetric $n$-by-$n$ matrices on which
$\SO_{n}$ acts by conjugation. If $n = 2k+1$, then the corresponding Weyl
group is $B_k$ and $D_k$ if $n=2k$. Hence $\R[\so_{2k+1}]^{\SO_{2k+1}}$ is
generated by $\PowSum_2(\X),\PowSum_4(\X),\dots,\PowSum_{2k}(\X)$. For $n=2k$,
a minimal generating set is given by
$\PowSum_2(\X), \PowSum_4(\X)$, ..., $\PowSum_{2n-2}(\X)$ and the Pfaffian
$\pf(\X) = \sqrt{\det \X }$.
Theorem \ref{thm:main}
yields the following.

\begin{thm}\label{thm:Lie_codim1}
    Let $G \in \{ \SL_k, \SO_k : k \in \Z_{\ge 1} \}$ and let $X\subseteq\g$ be $G$-invariant and $k$-sparse. If $X$ is nonempty it intersects $\D_k$.
    \end{thm}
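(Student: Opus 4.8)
The plan is to transport the statement, via the Chevalley restriction isomorphism, to the Weyl group $W$ of $G$ acting on a Cartan subalgebra $\mathfrak t \subseteq \g$, and then to invoke Theorem~\ref{thm:main}. The point is that for each $G \in \{\SL_k, \SO_k\}$ the group $W$ is of type $A$, $B$, or $D$ --- precisely the cases settled by Theorem~\ref{thm:main} --- and that, by Theorem~\ref{thm:GL} and its analogue for $\SO_k$ recalled above, restriction to $\mathfrak t$ is an isomorphism $\R[\g]^G \cong \R[\mathfrak t]^W$ which carries a set of basic invariants $\pi_1,\dots,\pi_r$ of $G$, ordered by degree, to a set of basic invariants of $W$, again ordered by degree.

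First I would reduce to a principal $k$-sparse variety: for any $\q \in X$ the set $X_k(\q) = \{A \in \g : \pi_i(A) = \pi_i(\q) \text{ for } i \le k\}$ is contained in $X$, so it suffices to prove $X_k(\q) \cap \D_k \neq \emptyset$ for every $\q \in \g$. Since $\D_k = G \cdot \H_k(W)$ with $\H_k(W) \subseteq \mathfrak t$ and the $\pi_i$ are $G$-invariant, such a point exists if and only if there is $\p \in \H_k(W)$ with $\restr{\pi_i}{\mathfrak t}(\p) = \pi_i(\q)$ for $i \le k$. Consider the $W$-invariant subvariety $\overline X := X_k(\q) \cap \mathfrak t$ of $\mathfrak t$: it is cut out by the polynomials $\restr{\pi_i}{\mathfrak t} - \pi_i(\q)$, $i \le k$, hence is $k$-sparse with respect to $W$ (Lemma~\ref{lem:ind} takes care of the bookkeeping when two of the degrees coincide, which for $\SO_{2k}$ is exactly the $D$-type situation of \eqref{eqn:Dn-invs}, the Pfaffian restricting to $e_r$ on $\mathfrak t$). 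Granting $\overline X \neq \emptyset$, Theorem~\ref{thm:main} applied to the type-$A$, $B$, or $D$ group $W$ yields a point $\p \in \overline X \cap \H_k(W) \subseteq X_k(\q) \cap \D_k$, which completes the proof.

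The only step that is not a formal transport along the Chevalley isomorphism is the nonemptiness of $\overline X$, i.e.\ that the orbit map $\pi \colon \g \to \R^r$ and its restriction $\restr{\pi}{\mathfrak t}$ have the same image, so that the values $(\pi_1(\q),\dots,\pi_k(\q))$, realized at $\q \in \g$, are also realized on $\mathfrak t$. I expect this to be the main obstacle, and it is where the structure theory of $\g$ enters: one needs that $\mathfrak t$ meets the closure of every adjoint orbit. For $\SO_k$ this is immediate, since every skew-symmetric matrix is conjugate into the standard block-diagonal Cartan form; for $\SL_k$ one passes to the semisimple part of the Jordan decomposition, which lies in the orbit closure --- so the continuous, $G$-invariant functions $\pi_i$ take the same values there --- and which is conjugate into $\mathfrak t$. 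Once this surjectivity is in hand, the argument above runs through without further difficulty.
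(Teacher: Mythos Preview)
Your overall route --- restrict to the Cartan, apply Theorem~\ref{thm:main} to the resulting $A$-, $B$-, or $D$-type Weyl group, and pull the conclusion back via $\D_k = G\cdot\H_k(W)$ --- is exactly the derivation the paper has in mind, and your reduction to principal varieties together with the treatment of $\SO_k$ (where every skew-symmetric matrix is orthogonally conjugate into the standard Cartan) is correct.

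The gap is in the $\SL_k$ step. You assert that the semisimple part $A_s$ of $A\in\sln_n$ is conjugate into $\mathfrak t$, but over $\R$ ``semisimple'' only means $\C$-diagonalizable; it does \emph{not} force real eigenvalues. Concretely, $A=\bigl(\begin{smallmatrix}0&-1\\1&0\end{smallmatrix}\bigr)\in\sln_2$ is already semisimple, with eigenvalues $\pm i$, and is not $\SL_2(\R)$-conjugate to any real diagonal matrix. Hence $\pi(\g)\neq\pi(\mathfrak t)$: with $\pi_1(A)=\tr(A^2)$ one gets $\pi_1(A)=-2$, a value never attained on $\mathfrak t\cong\R$, so $\overline X=X_1(A)\cap\mathfrak t$ is empty and Theorem~\ref{thm:main} cannot be invoked. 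Embedding this block shows the same obstruction for every $\SL_n$, $n\ge 2$. This is not merely a defect of your argument: with the definition $\D_i=G\cdot\H_i(W)$ used in the text, the $1$-sparse variety $\{A\in\sln_2:\tr(A^2)=-2\}$ is nonempty yet disjoint from $\D_1=G\cdot\mathfrak t=\{A:\tr(A^2)>0\}\cup\{0\}$. So either one must read $\D_i$ via the discriminant stratification of all of $\g$ rather than as the literal $G$-orbit of $\H_i$, or pass to a compact real form; as written, the surjectivity you correctly isolate as the crux simply fails for $\SL_k$.
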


For suitable Lie groups $G$, Theorem~\ref{thm:Lie_codim1} gives a first relation
between real varieties invariant under the action of $G$ and the discriminant
locus and Conjecture~\ref{conj:main} is reasonable for this setting.
It would be very interesting to explore this connection further.

\bibliographystyle{siam}
\bibliography{DegreePrinciples}

\begin{thebibliography}{10}

\bibitem{velasco}
{\sc J.~Acevedo and M.~Velasco}, {\em Test sets for nonnegativity of
  polynomials invariant under a finite reflection group}, J. Pure Appl.
  Algebra, 220 (2016), pp.~2936--2947.

\bibitem{barbancon}
{\sc G.~Barban{\c c}on}, {\em Whitney regularity of the image of the chevalley
  mapping}.
\newblock preprint \href{http://arxiv.org/abs/1410.3504}{arXiv:1410.3504},
  October 2014.

\bibitem{roy}
{\sc S.~Basu, R.~Pollack, and M.-F. Roy}, {\em Algorithms in real algebraic
  geometry}, vol.~10 of Algorithms and Computation in Mathematics,
  Springer-Verlag, Berlin, second~ed., 2006.

\bibitem{BB05}
{\sc A.~Bj{\"o}rner and F.~Brenti}, {\em Combinatorics of {C}oxeter groups},
  vol.~231 of Graduate Texts in Mathematics, Springer, New York, 2005.

\bibitem{FH91}
{\sc W.~Fulton and J.~Harris}, {\em Representation theory}, vol.~129 of
  Graduate Texts in Mathematics, Springer-Verlag, New York, 1991.
\newblock A first course, Readings in Mathematics.

\bibitem{giv}
{\sc A.~Giventhal}, {\em Moments of random variables and the equivariant morse
  lemma}, Communications of the Moscow Mathematical Society,  (1986).

\bibitem{GW}
{\sc R.~Goodman and N.~R. Wallach}, {\em Symmetry, representations, and
  invariants}, vol.~255 of Graduate Texts in Mathematics, Springer, Dordrecht,
  2009.

\bibitem{HC}
{\sc Harish-Chandra}, {\em Spherical functions on a semisimple {L}ie group.
  {I}}, Amer. J. Math., 80 (1958), pp.~241--310.

\bibitem{Hel}
{\sc S.~Helgason}, {\em Differential geometry and symmetric spaces}, Pure and
  Applied Mathematics, Vol. XII, Academic Press, New York-London, 1962.

\bibitem{Glo}
{\sc D.~Henrion, J.-B. Lasserre, and J.~L{\"o}fberg}, {\em Glopti{P}oly 3:
  moments, optimization and semidefinite programming}, Optim. Methods Softw.,
  24 (2009), pp.~761--779.

\bibitem{hum}
{\sc J.~E. Humphreys}, {\em Reflection groups and {C}oxeter groups}, vol.~29 of
  Cambridge Studies in Advanced Mathematics, Cambridge University Press,
  Cambridge, 1990.

\bibitem{IKM}
{\sc K.~Iwasaki, A.~Kenma, and K.~Matsumoto}, {\em Polynomial invariants and
  harmonic functions related to exceptional regular polytopes}, Experiment.
  Math., 11 (2002), pp.~313--319.

\bibitem{Mehta}
{\sc M.~L. Mehta}, {\em Basic sets of invariant polynomials for finite
  reflection groups}, Comm. Algebra, 16 (1988), pp.~1083--1098.

\bibitem{procesi}
{\sc C.~Procesi and G.~Schwarz}, {\em Inequalities defining orbit spaces},
  Invent. Math., 81 (1985), pp.~539--554.

\bibitem{rie}
{\sc C.~Riener}, {\em On the degree and half-degree principle for symmetric
  polynomials}, J. Pure Appl. Algebra, 216 (2012), pp.~850--856.

\bibitem{Riener14}
\leavevmode\vrule height 2pt depth -1.6pt width 23pt, {\em Symmetric
  semi-algebraic sets and non-negativity of symmetric polynomials}, J. Pure
  Appl. Algebra,  (in press).
\newblock preprint \href{http://arxiv.org/abs/1409.0699}{arXiv:1409.0699}.

\bibitem{stein}
{\sc R.~Steinberg}, {\em Invariants of finite reflection groups}, Canad. J.
  Math., 12 (1960), pp.~616--618.

\bibitem{tim}
{\sc V.~Timofte}, {\em On the positivity of symmetric polynomial functions.
  {I}. {G}eneral results}, J. Math. Anal. Appl., 284 (2003), pp.~174--190.

\end{thebibliography}

\end{document}